\newcommand{\RR}{\mathbb{R}}
\newcommand{\OO}{\mathscr{O}}
\newcommand{\Cfield}{\mathbb{C}}
\newcommand{\Spec}{\textnormal{Spec}\,}
\newcommand{\HHom}{\mathscr{H}om} %for sheaf
\newcommand{\Hom}{\textnormal{Hom}}
\newcommand{\dimension}{\textnormal{dim}\,}
\newcommand{\rank}{\textnormal{rk}\,}
\newcommand{\Ext}{\textnormal{Ext}}
\newcommand{\EExt}{\mathscr{E}xt}
\newcommand{\al}{\alpha}
\newcommand{\Coh}{\textnormal{Coh}}
\newcommand{\Ap}{\mathcal{A}^p}
\newcommand{\arinj}{\ar@{^{(}->}}
\newcommand{\arsurj}{\ar@{->>}}
\newcommand{\areq}{\ar@{=}}
\newtheorem{theorem}{Theorem}[section]
\newtheorem{lemma}[theorem]{Lemma}
\newtheorem{coro}[theorem]{Corollary}
\newtheorem{pro}[theorem]{Proposition}
\theoremstyle{definition}
\theoremstyle{remark}
\newtheorem{remark}[theorem]{Remark}
\begin{document}

\title{Polynomial Bridgeland Stable Objects and Reflexive Sheaves}

\author{Jason Lo}
\address{Department of Mathematics \\ University of Missouri-Columbia \\ Columbia MO 65211, USA}
\email{locc@missouri.edu}

\subjclass[2010]{Primary 14F05, 14J10, 14J60; Secondary 14J30}

\keywords{derived category, moduli, polynomial stability, reflexive sheaves}

\begin{abstract}
On a smooth projective threefold $X$, we show that there are only two isomorphism types for the moduli of stable objects with respect to Bayer's standard polynomial Bridgeland stability - the moduli of Gieseker-stable sheaves and the moduli of PT-stable objects (see \cite{Lo2}) - under the following assumptions: no two of the stability vectors are collinear, and the degree and rank of the objects are relatively prime.  We also interpret the intersection of the moduli spaces of PT-stable and dual-PT-stable objects as a moduli of reflexive sheaves, and point out its connections with the existence problem of Bridgeland stability conditions on smooth projective threefolds, and the existence of fine moduli spaces of complexes on elliptic threefolds.
\end{abstract}

\maketitle

\section{Introduction}

For a few years after Bridgeland introduced his notion of stability conditions on triangulated categories in \cite{Bridgeland}, it was not known how stability conditions on the derived category of coherent sheaves $D^b(X)$ of a smooth projective threefold $X$ can be constructed in general.  Recently, Bayer-Macr\`{i}-Toda described a conjectural construction of a Bridgeland stability on arbitrary smooth projective threefolds in \cite{BMT}, which was verified for $X=\mathbb{P}^3$ by Macr\`{i} \cite{Macri}.  Once the moduli spaces of Bridgeland-semistable objects are constructed, they could be used to define invariants for the underlying threefold, using tools such as Behrend's constructible functions, or integration over virtual fundamental classes.  At this stage, however, it is not clear what the Bridgeland-semistable objects and their moduli, with respect to Bayer-Macr\'{i}-Toda's stability, look like on smooth projective threefolds in general.

Before the work \cite{BMT} appeared, Bayer defined the notion of polynomial stability in \cite{BayerPBSC} as an approximation of Bridgeland stability, and wrote down a `standard family' of polynomial stability conditions on any smooth projective variety.  (Note: Toda also defined a notion of limit stability in \cite{TodaLSOp}, which can be regarded as a type of polynomial stability.)  In particular, on a smooth projective threefold $X$, Bayer singled out two polynomial stabilities, which he called DT-stability and PT-stability.  He showed that the DT-stable objects of rank 1 and degree 0  are exactly the ideal sheaves of 1-dimensional subschemes of $X$, while the PT-stabe objects of rank 1 and degree 0 are exactly the 2-term complexes given by stable pairs studied in Pandharipande-Thomas \cite{PT}.  Besides, DT-stability and PT-stability are related by a wall-crossing in the space of polynomial stability conditions.    Therefore, polynomial stability gives a viewpoint for higher-rank analogues of stable pairs, potentially helping us understand higher-rank Donaldson-Thomas (DT) invariants.

The moduli spaces of PT-semistable objects were constructed as universally closed algebraic spaces of finite type in \cite{Lo1, Lo2}.  One motivation for this article is to understand other moduli spaces that could arise from polynomial stabilities on threefolds.  As it turns out, under a mild assumption on the parameters for polynomial stability (condition V1 in Section \ref{section-polystab3folds} below), there is only one more type of moduli spaces  other than the moduli spaces of DT-semistable objects and the moduli spaces of PT-semistable objects - this follows from the discussion in Section \ref{section-polystab3folds}.  Furthermore, if we only consider objects that have relatively prime degree and rank, then the moduli spaces of DT-semistable objects, which are the moduli of Gieseker-semistable sheaves, and the moduli spaces of PT-semistable objects are the only moduli spaces that can arise (Theorem \ref{pro-onlytwomoduli}).

In the process of proving Theorem \ref{pro-onlytwomoduli}, we obtain homological characterisations of semistable objects with respect to various polynomial stabilities.  A by-product of this is an algebraic space of finite-type that parametrises 2-term complexes $E^\bullet$ on a smooth projective threefold $X$ such that $H^{-1}(E)$ is a reflexive sheaf, and $H^0(E)$ is a 0-dimensional sheaf (Theorem \ref{theorem-intersectoftwopropermoduli}).  This moduli space can be considered as a moduli space of reflexive sheaves, where a reflexive sheaf may be `decorated' with extra points lying on its singularity locus (which is a codimension-3 locus).  Aside from this, this moduli space is interesting in its own right for the following two reasons:

First, the 2-term complexes described above resemble a particular class of `tilt-semitable objects' defined in Bayer-Macr\`{i}-Toda (see \cite[Section 7.2]{BMT}).  In particular, they show that the existence of Bridgeland stability conditions on a threefold is equivalent to a Bogomolov-Gieseker-type inequality for tilt-stable objects of slope 0 \cite[Conjecture 3.2.7]{BMT}.  If we can understand the relations between the objects parametrised by the moduli space in Theorem \ref{theorem-intersectoftwopropermoduli} and tilt-stable objects in the sense of Bayer-Macr\`{i}-Toda, we can hope to make progress towards \cite[Conjecture 3.2.7]{BMT} and hence the existence of Bridgeland stabilities on threefolds.    At the very least, we can expect to produce more examples of objects satisfying their conjectural inequality, by using existing results on stable reflexive sheaves on threefolds in works such as Hartshorne's \cite{SRS}, Langer's \cite{Langer} and Mir\'{o}-Roig's \cite{MR}.

Second, the moduli space of complexes in Theorem \ref{theorem-intersectoftwopropermoduli} gives an example of a moduli of stable complexes that is a fine moduli space.  In a forthcoming article by the author \cite{Lo3}, we study Fourier-Mukai transforms on elliptic threefolds, and identify a criterion under which 2-term complexes are mapped to torsion-free sheaves via the Fourier-Mukai transforms constructed by Bridgeland-Maciocia \cite{BMef}.  We show that each of these Fourier-Mukai transforms induces an open immersion from an open subspace $\mathcal{N}$ of the moduli space of complexes in Theorem \ref{theorem-intersectoftwopropermoduli} to a moduli of Gieseker-stable torsion-free sheaves.  That is, $\mathcal{N}$ is  a fine moduli space of complexes.

\subsection{Statements of main results}

To define a  polynomial stability on a smooth projective variety $X$ as in \cite{BayerPBSC}, we need to choose stability vectors $\rho_i$ (where $0\leq i\leq \dimension X$), which are nonzero complex numbers, and a perversity function $p$ on the topological space of $X$ that is compatible with the $\rho_i$.  The perversity function $p$ determines the heart $\Ap$ of a bounded t-structure on $D^b(X)$.  Given a polynomial stability $\sigma$, we can fix a Chern character $ch$, and ask whether we can construct the moduli stack parametrising $\sigma$-semistable objects of Chern character $ch$ in $\Ap$.  Our first main result is the following:
\begin{theorem}\label{pro-onlytwomoduli}
Let $X$ be a smooth projective threefold.  Let $C$ be the set of all possible Chern characters $ch$ such that $ch_0$ is nonzero, and $ch_0, ch_1$ are relatively prime.  Let $\sigma$ be any polynomial stability condition on $D^b(X)$ where no two of the stability vectors $\rho_i$ are collinear.  Let $\mathscr{M}^\sigma_{ch}$ denote the moduli space of $\sigma$-stable objects of Chern character $ch$.  Then for any $ch \in C$, the moduli space $\mathscr{M}^\sigma_{ch}$ is isomorphic to one of the spaces in the following lists:
\begin{itemize}
\item The  moduli spaces of Gieseker-stable sheaves of Chern character $ch$, where $ch \in C$.
\item The moduli spaces of PT-stable objects of Chern character $ch$ in $\Ap$, where $p(d) = -\lfloor \frac{d}{2}\rfloor$ and $ch \in C$.
\end{itemize}
\end{theorem}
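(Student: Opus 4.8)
The plan is to classify all polynomial stabilities with pairwise non-collinear stability vectors $\rho_i$ on a threefold, and show that under the coprimality condition on $(ch_0, ch_1)$ the moduli of $\sigma$-stable objects can only reproduce one of two known moduli. First I would recall, from Bayer's setup, that a polynomial stability $\sigma$ on a threefold is determined by the stability vectors $\rho_0,\dots,\rho_3 \in \Cfield^*$ together with a perversity function $p$, and that the heart $\Ap$ and the ordering of phases depend only on the combinatorial arrangement of the $\rho_i$ in the upper half plane relative to the real axis. The non-collinearity hypothesis means that the four rays $\RR_{>0}\rho_i$ are distinct, so the key invariant of $\sigma$ is the cyclic order in which these rays appear as one rotates. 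I would argue that, up to the $\widetilde{\GL^+}(2,\RR)$-action (which does not change the set of semistable objects) and up to the symmetry between a stability and its dual, only a small number of such orderings give rise to a valid polynomial stability with a well-defined heart, and these orderings fall into finitely many cases.

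Next I would reduce the count of cases using the structure of $\Ap$ and the homological characterisations of semistable objects obtained earlier in the paper. The crucial point is that for $ch \in C$ (so $ch_0 \neq 0$ and $\gcd(ch_0, ch_1) = 1$), a $\sigma$-stable object $E$ cannot be strictly semistable: coprimality of rank and degree forces any potential destabilising subobject to have the same reduced leading Hilbert-polynomial coefficients, which is incompatible with a nontrivial proper subobject once one tracks the $(ch_0, ch_1)$ data. This rigidity should let me discard the intermediate stabilities — the ones whose ray ordering differs from both the DT (Gieseker) and PT configurations — by showing that for such $\sigma$ the class of $\sigma$-stable objects of Chern character in $C$ either coincides with the Gieseker-stable sheaves or with the PT-stable objects, because the extra freedom in the ordering only affects objects whose invariants violate coprimality (e.g.\ objects supported in dimension $\leq 1$, or those with $H^0$ of positive-dimensional support). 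In other words, the coprimality constraint collapses the a priori three-type classification (Gieseker, PT, and one intermediate type mentioned in the introduction) down to two.

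Concretely, I would proceed case by case on the cyclic arrangement of $\rho_0,\rho_1,\rho_2,\rho_3$. For each arrangement I would identify the induced t-structure heart, compute the leading terms of the central charge $Z(ch)(t) = \sum_i \rho_i \int_X ch_i t^{\,\cdot}$, and determine which cohomology sheaves $H^{-1}(E)$, $H^0(E)$ an $\sigma$-semistable object of class in $C$ may have. Using the comparison of the leading coefficients (rank and degree terms) governed by $\rho_0, \rho_1$, together with the subobject/quotient tests in $\Ap$, I expect to show that the stability reduces to Gieseker stability exactly when the $\rho_0$-ray leads and the heart is $\Coh(X)$, and to PT-stability exactly when the arrangement matches $p(d) = -\lfloor d/2 \rfloor$; all remaining arrangements either are excluded by the requirement that $\sigma$ be a genuine polynomial stability, or yield no stable objects with coprime $(ch_0, ch_1)$, or are identified with one of these two via a shift/duality.

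The main obstacle I anticipate is the case analysis for the intermediate arrangement, i.e.\ the third type of moduli flagged in the introduction: I must show that although such a $\sigma$ has a legitimate heart and admits stable objects in general, \emph{none} of its stable objects can have Chern character in $C$, or else that those with $ch \in C$ coincide with PT-stable (or Gieseker-stable) objects. Establishing this requires a careful argument that the characteristic feature distinguishing the intermediate stability — the presence of a nonzero lower-dimensional torsion part in $H^0(E)$ or a non-reflexive $H^{-1}(E)$ — is precisely what is obstructed by $\gcd(ch_0, ch_1) = 1$, since coprimality rules out the subobjects that would otherwise be $\sigma$-stable there. I would handle this by showing that any such object admits a destabilising subobject or quotient whenever its rank and degree are coprime, using the homological characterisations together with the non-collinearity of the $\rho_i$ to guarantee strict inequalities of phases.
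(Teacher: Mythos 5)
Your skeleton---reduce to finitely many ray configurations via shifts, the $\GLuniv$-action and duality, then let coprimality collapse the count---is indeed the paper's skeleton, and your first paragraph matches the discussion in Section \ref{section-polystab3folds}. But the mechanism you propose for the decisive case, the intermediate stability, is wrong. Up to the reductions, the three surviving types are DT, PT, and an intermediate type with two incarnations exchanged by derived duality: $\sigma_3$ (phases $\phi(\rho_0),\phi(\rho_1),\phi(-\rho_2)>\phi(-\rho_3)$, condition V3) and $\sigma_4$ (phases $\phi(\rho_0),\phi(-\rho_2)>\phi(-\rho_3)>\phi(\rho_1)$, condition V2). Your plan for this type is to show that either it has no stable objects with $ch\in C$, or that its stable objects \emph{coincide as a class} with the PT-stable (or Gieseker-stable) objects, by arguing that coprimality of $(ch_0,ch_1)$ obstructs the distinguishing features (a $1$-dimensional piece in $H^0(E)$, a non-reflexive $H^{-1}(E)$). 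Both claims fail for $\sigma_3$. It has plenty of stable objects with coprime invariants: $F[1]$ is $\sigma_3$-stable for every $\mu$-stable reflexive $F$ of coprime rank and degree (Corollary \ref{coro-reflexivesareV2}). And its stable objects do not coincide with the PT-stable ones: a stable-pair complex $E$ (PT-stable, of rank $1$, hence coprime) has $H^{-1}(E)=I_Z$ with $Z$ a nonempty curve, and by Remark \ref{rmk-observation1} the $1$-dimensional sheaf $T=H^{-1}(E)^{\ast\ast}/H^{-1}(E)=\OO_Z$ is a subobject of $E$ in $\Ap$ whose limiting phase is $\phi(\rho_1)>\phi(-\rho_3)$, so $T$ destabilises $E$ with respect to $\sigma_3$. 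The deeper point is that coprimality can \emph{never} exclude such phenomena: reflexivity of $H^{-1}(E)$, the dimension of $H^0(E)$, and low-dimensional subobjects only move $ch_2$ and $ch_3$, and are completely invisible to the pair $(ch_0,ch_1)$.

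The missing idea is that the intermediate type is absorbed at the level of moduli spaces, not of object classes, and via a two-step comparison. Derived duality gives an isomorphism $\mathscr{M}^{\sigma_3}_{ch}\cong\mathscr{M}^{\sigma_4}_{ch'}$, where $ch'=(ch_0,-ch_1,ch_2,-ch_3)$ still lies in $C$; this change of Chern character is exactly why the theorem's lists quantify over all $ch\in C$ rather than fixing one. Coprimality enters only in the second step: $\sigma_4$ and PT differ solely in the relative order of $\phi(\rho_0)$ and $\phi(-\rho_2)$, and once coprimality upgrades $\mu$-semistability of $H^{-1}(E)$ to $\mu$-stability, both stabilities admit the identical characterisation---$H^{-1}(E)$ torsion-free and $\mu$-stable, $H^0(E)$ $0$-dimensional, $\Hom_{D^b(X)}(\Coh_{\leq 0}(X),E)=0$ (Lemma \ref{lemma-PTsigma4charcoprime})---and hence have literally the same stable objects (Corollary \ref{coro-PT4equiv}). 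Composing, $\mathscr{M}^{\sigma_3}_{ch}\cong\mathscr{M}^{PT}_{ch'}$; similarly $\mathscr{M}^{\sigma_5}_{ch}\cong\mathscr{M}^{PT}_{ch'}$ directly by duality, and DT gives the Gieseker moduli. Your case analysis of hearts and leading coefficients is fine as far as it goes, but without this dualise-then-identify step your treatment of the intermediate case rests on a false assertion and cannot be repaired in the form stated.
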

 When $p(d) = -\lfloor \frac{d}{2}\rfloor$ as in this proposition, the heart of bounded t-structure $\Ap \subset D^b(X)$ is given by $\Ap = \langle \Coh_{\leq 1}(X), \Coh_{\geq 2}(X)[1] \rangle$.  Here,  $\Coh_{\leq 1}(X)$ denotes the category of coherent sheaves on $X$ whose support have dimension at most 1, $\Coh_{\geq 2}(X)$ denotes the category of coherent sheaves that do not have torsion subsheaves supported in dimension 1 or less, and $\langle \Coh_{\leq 1}(X), \Coh_{\geq 2}(X)[1] \rangle$ denotes the smallest extension-closed subcategory of $D^b(X)$ containing $\Coh_{\leq 1}(X)$ and $\Coh_{\geq 2}(X)[1]$.

Our second main result is the following:
\begin{theorem}\label{theorem-intersectoftwopropermoduli}
Let $X$ be a smooth projective threefold.  Let $ch$ be a Chern character such that $ch_0 \neq 0$, and $ch_0, ch_1$ are relatively prime.  Then there is an algebraic space of finite type, which is the intersection of two proper algebraic spaces of finite type, parametrising all objects $E \in \Ap$ of Chern character $ch$ such that $H^{-1}(E)$ is a $\mu$-stable reflexive sheaf, $H^0(E)$ is a 0-dimensional sheaf, and the map
    \[
      H^2 (\delta) :  \EExt^1 (H^{-1}(E),\OO_X) \to \EExt^3 (H^0(E),\OO_X)
    \]
(where $\delta$ is as in  \eqref{triangle-Edualised} below) is surjective.
\end{theorem}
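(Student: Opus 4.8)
The plan is to realise the moduli space as the intersection of the moduli of PT-stable objects with the moduli of \emph{dual}-PT-stable objects, and then to translate membership in this intersection into the three stated homological conditions by dualising the canonical triangle of $E$. I would set up the dualising functor $\DD := R\HHom(-,\OO_X)[2]$, an anti-autoequivalence of $D^b(X)$. The moduli space $\mathscr{M}^{PT}_{ch}$ of PT-stable objects of Chern character $ch$ is a proper algebraic space of finite type by \cite{Lo1,Lo2} (properness uses that $ch_0,ch_1$ are coprime, so PT-stability and PT-semistability coincide). Applying $\DD$ to the PT-stability data produces a second polynomial stability---dual-PT-stability---whose stable objects are exactly the $\DD$-images of PT-stable objects; since $\DD$ is an algebraic anti-equivalence sending $ch$ to a fixed Chern character $\DD(ch)$, the corresponding moduli space $\mathscr{M}^{\widehat{PT}}_{ch}$ is isomorphic to $\mathscr{M}^{PT}_{\DD(ch)}$ and hence is again a proper algebraic space of finite type. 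Regarding both as locally closed substacks of the stack of objects of $\Ap$ of Chern character $ch$, their intersection is an algebraic space of finite type, cut out as the intersection of two proper ones.

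The core of the argument is the homological identification of this intersection. Writing $F=H^{-1}(E)$ and $Q=H^0(E)$, I would apply $R\HHom(-,\OO_X)$ to the canonical triangle $F[1]\to E\to Q\to F[2]$, obtaining the dualised triangle \eqref{triangle-Edualised}
\[
  R\HHom(Q,\OO_X)\to R\HHom(E,\OO_X)\to R\HHom(F,\OO_X)[-1]\xrightarrow{\delta} R\HHom(Q,\OO_X)[1].
\]
Using the standard support and vanishing bounds on a smooth threefold---for a torsion-free sheaf $\EExt^{i}(F,\OO_X)$ has support of codimension $\ge i$ with $\EExt^3(F,\OO_X)=0$, while for reflexive $F$ one has $\EExt^{\ge 2}(F,\OO_X)=0$ and $\EExt^1(F,\OO_X)$ is $0$-dimensional, and for a $0$-dimensional $Q$ only $\EExt^3(Q,\OO_X)$ survives---the long exact cohomology sequence of this triangle computes $\mathcal{H}^\bullet(R\HHom(E,\OO_X))$. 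In particular the connecting map in degree $2$ is precisely $H^2(\delta)\colon\EExt^1(F,\OO_X)\to\EExt^3(Q,\OO_X)$, and one reads off that $\mathcal{H}^3(R\HHom(E,\OO_X))=\cokernel H^2(\delta)$ once $\EExt^2(F,\OO_X)=0$. Equivalently, after the shift defining $\DD$, surjectivity of $H^2(\delta)$ is exactly the vanishing $\mathcal{H}^1(\DD(E))=0$ needed for $\DD(E)\in\Ap$.

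With this computation in hand I would prove the two inclusions. If $E$ lies in the intersection, then PT-stability gives that $F$ is $\mu$-semistable and torsion-free with $Q\in\Coh_{\le 1}(X)$, while dual-PT-stability forces $\DD(E)\in\Ap$ to be PT-stable; chasing the long exact sequence, membership of $\DD(E)$ in $\Ap$ kills $\EExt^2(F,\OO_X)$ and the $1$-dimensional part of $Q$, so $F$ satisfies $S_2$ and is therefore reflexive (cf.\ \cite{SRS}) and $Q$ is $0$-dimensional, whence the surjectivity of $H^2(\delta)$ follows as above; coprimality of $(ch_0,ch_1)$ upgrades $\mu$-semistability of $F$ to $\mu$-stability. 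Conversely, if the three conditions hold then the computation shows $\DD(E)\in\Ap$ with $\mathcal{H}^{-1}(\DD(E))=\HHom(F,\OO_X)$ a $\mu$-stable reflexive sheaf and $\mathcal{H}^0(\DD(E))$ a $0$-dimensional sheaf; invoking the homological characterisation of PT-stable objects obtained while proving Theorem \ref{pro-onlytwomoduli} (and the fact that $F\mapsto\HHom(F,\OO_X)$ preserves $\mu$-stability), both $E$ and $\DD(E)$ are PT-stable, so $E$ lies in the intersection.

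The main obstacle I anticipate is this last equivalence: upgrading the diagram chase from a statement about cohomology sheaves to a statement about \emph{stability}. Membership of $\DD(E)$ in $\Ap$ is only necessary for dual-PT-stability, and I must verify that the PT-stability of $E$ together with the homological shape of $\DD(E)$ actually forces $\DD(E)$ to be PT-stable (and conversely). This is where the homological characterisation of PT-stable objects from Theorem \ref{pro-onlytwomoduli} and the behaviour of $\mu$-stability under $\HHom(-,\OO_X)$ must be combined carefully; the bookkeeping of exactly which cohomology sheaves of $\DD(E)$ are forced to vanish, and showing this is equivalent to reflexivity of $F$ and $0$-dimensionality of $Q$ rather than merely weaker purity, is the delicate point.
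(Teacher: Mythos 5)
Your proposal is correct and follows essentially the same route as the paper: the space is realised as the intersection of the proper moduli of PT-stable and dual-PT (i.e.\ $\sigma_5$-)stable objects (the paper's Proposition \ref{pro-everyoneisproper}), and membership in that intersection is translated into the three homological conditions via the long exact sequence of the dualised triangle \eqref{triangle-Edualised} (the paper's Proposition \ref{pro-dualintersectionstobj}, resting on Lemmas \ref{lemma-Cstablereflexive}, \ref{lemma-dualtrianglelesanal}, \ref{lemma-PTsigma4charcoprime} and Proposition \ref{pro-CstH0zerodimcoprimechar}). The only real divergence is a sub-step: where you extract reflexivity of $H^{-1}(E)$ from the cohomological vanishing forced on $\DD(E)$ (which works, but needs the $0$-dimensionality of $H^0(\DD(E))$ and not merely $\DD(E)\in\Ap$, since torsion-freeness plus $\EExt^2(H^{-1}(E),\OO_X)=0$ alone does not rule out a pure $1$-dimensional piece of $H^{-1}(E)^{\ast\ast}/H^{-1}(E)$), the paper gets it more directly from $\sigma_5$-stability via the destabilising subobject $H^{-1}(E)^{\ast\ast}/H^{-1}(E)\hookrightarrow E$ of Remark \ref{rmk-observation1}.
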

In Section \ref{subsection-functorialreflexive}, we explain how the algebraic space in this theorem can be seen as a functorial construction of the moduli of reflexive sheaves $F$ on $X$, where each isomorphism class $[F]$ occurs with multiplicity up to the number of distinct quotient sheaves (up to isomorphism) of $\EExt^1 (F,\OO_X)$.

These two main results follow naturally, once we have the homological characterisations of polynomial stable objects in Section \ref{section-stabobjchar}.

\subsection{Notation}

For a coherent sheaf $E$ on a scheme $X$, we write $E^\ast$ to denote the sheaf dual $\HHom (E,\OO_X)$; if $E \in D^b(X)$ is a complex of coherent sheaves on $X$, we write $E^\vee$ to denote the derived dual $R\HHom (E,\OO_X)$, and write $H^i(E)$ to denote the degree-$i$ cohomology (which is a coherent sheaf) of $E$.  We will use $\mathbb{D}(-)$ to denote the dualizing functor $(-)^\vee [2]$ on $D^b(X)$.

For a polynomial stability $\sigma$ on $D^b(X)$, we write $\sigma^\ast$ to denote the dual polynomial stability.  We will use $\Coh (X)$ to denote the category of coherent sheaves on $X$.  For any integer $d$, we write $\Coh_{\leq d}(X)$ to denote the category of coherent sheaves on $X$ whose support have dimension at most $d$, and write $\Coh_{\geq d}(X)$ to denote the category of coherent sheaves on $X$ that have no subsheaves supported in  dimension $d-1$ or less.  For any $0\leq d< d'\leq 3$, we will write $\langle \Coh_{\leq d}(X),\Coh_{\geq d'}(X)[1] \rangle$ to denote the smallest extension-closed subcategory of $D^b(X)$ containing $\Coh_{\leq d}(X)$ and $\Coh_{\geq d'}(X)[1]$.

\section{Polynomial Stabilities on threefolds}\label{section-polystab3folds}

Throughout this article,  $X$ will be a smooth projective threefold.

Consider a standard polynomial stability $\sigma = (\omega, \rho, p, U)$ in the sense of Bayer \cite{BayerPBSC}.  Recall that, here, $\omega$ is a fixed ample $\mathbb{R}$-divisor on $X$, whereas
\[
\rho = (\rho_0, \rho_1, \rho_2, \rho_3) \in (\Cfield^\ast)^{4}
 \]
 is a quadruple of nonzero complex numbers such that each $\rho_d/\rho_{d+1}$ lies in the upper half complex plane.  And $p$ is a perversity function associated to $\rho$, i.e.\ $p$ is a function $\{0,1,2,3\} \to \mathbb{Z}$ such that $(-1)^{p(d)}\rho_d$ lies in the upper half plane for each $d$.  The last part, $U$, of the data $\sigma$ is a unipotent operator (i.e.\ an element of $A^\ast (X)_\Cfield$ of the form $U=1+N$, where $N$ is concentrated in positive degrees).  The perversity function $p$ determines a t-structure on $D^b(X)$ with heart $\Ap$.  Once the data $\sigma$ is given, the group homomorphism (usually called the `central charge')
 \begin{align*}
   Z_\sigma :  K(D^b(X)) &\to \Cfield [m] \\
     E &\mapsto Z_\sigma(E)(m) := \int_X \sum_{d=0}^3 \rho_d \omega^d m^d ch(E) \cdot U
 \end{align*}
 has the property that $Z_\sigma(E)(m)$ lies in the upper half plane for any $0 \neq E \in \Ap$ and real number $m \gg 0$.

 For $0\neq E \in \Ap$, if we write $Z_\sigma(E)(m) \in \mathbb{R}_{>0} \cdot e^{i \pi \phi (E) (m)}$ for some real number $\phi (E)(m)$ for $m \gg 0$, then we have $\phi (E)(m) \in (0, 1]$ for $m \gg 0$.  We say that $E$ is $\sigma$-semistable if, for all subobjects $0 \neq F \subsetneq E$ in $\Ap$, we have $\phi (F)(m) \leq \phi (E) (m)$ for all $m \gg 0$ (which we write $\phi (F) \preceq \phi (E)$ to denote);  and we say $E$ is $\sigma$-stable if $\phi (F) (m) < \phi (E)(m)$ for all $m \gg 0$ (which we write $\phi (F) \prec \phi (E)$ to denote). The reader may consult \cite[Section 3.2]{BayerPBSC} for more details on the basics of polynomial stability.

Up to shifting the $\sigma$-semistable objects in $D^b(X)$, we may assume that $p(0)=0$.  Since the perversity function $p$ satisfies $p(d) \geq p(d+1) \geq p(d)-1$, there are only four such perversity functions that take on at least three distinct values, listed in Table \ref{eqn-fourstabfctns}.

Recall that for the dual stability $\sigma^\ast = (\omega, \rho^\ast, \bar{p}, U^\ast)$,  we use the dual perversity function $\bar{p}$ defined by $\bar{p}(d) = -d-p(d)$ \cite[Definition 3.1.1]{BayerPBSC}.  The duals of the four perversity functions in \eqref{eqn-fourstabfctns} all take on at most two distinct values.  Therefore, up to shifting and taking derived duals of the semistable objects, we  obtain all possible isomorphism classes of  moduli of semistable objects with respect to standard polynomial stabilities under the following assumption:
\begin{itemize}
\item[V0.] The perversity function $p$ takes on at most two distinct values, and  $p(0)=0$.
\end{itemize}
  This assumption implies that the heart of t-structure $\Ap$ is  of the form
  \[
  \Ap = \langle \Coh_{\leq d}(X), \Coh_{\geq d+1}(X)[1]\rangle
   \]
   for some $0\leq d < 3$; that is, it is obtained from $\Coh (X)$ by tilting once.

\begin{table}
\begin{tabular}[b]{c|c c c c}
$d$ & 0 & 1 & 2 & 3 \\
\hline
\multirow{4}{*}{$p(d)$} & 0 & 0 & $-1$ & $-2$ \\
& 0 & $-1$ & $-1$ & $-2$ \\
& 0 & $-1$ & $-2$ & $-2$ \\
& 0 & $-1$ & $-2$ & $-3$
\end{tabular}
\caption{Perversity functions $p$ with $p(0)=0$  that take on at least three distinct values.}
\label{eqn-fourstabfctns}
\end{table}

On the space of polynomial stabilities on $X$, we also have a $\widetilde{GL}^+(2,\RR)$-action \cite[see Lemma 8.2]{Bridgeland}, which does not alter the semistable objects.  Up to this action, there are only five distinct standard polynomial stabilities on $X$ satisfying V0 and the following condition:
\begin{itemize}
\item[V1.] No two of the stability vectors $\rho_i$ are collinear.
\end{itemize}
 These five polynomial stabilities correspond to the  configurations of stability vectors $\rho_i$ in Figure \ref{figure-PTstab} below, which we label as DT, PT, $\sigma_3$, $\sigma_4$ and $\sigma_5$.  They all have the same perversity function $p(d)=-\lfloor \frac{d}{2}\rfloor$.

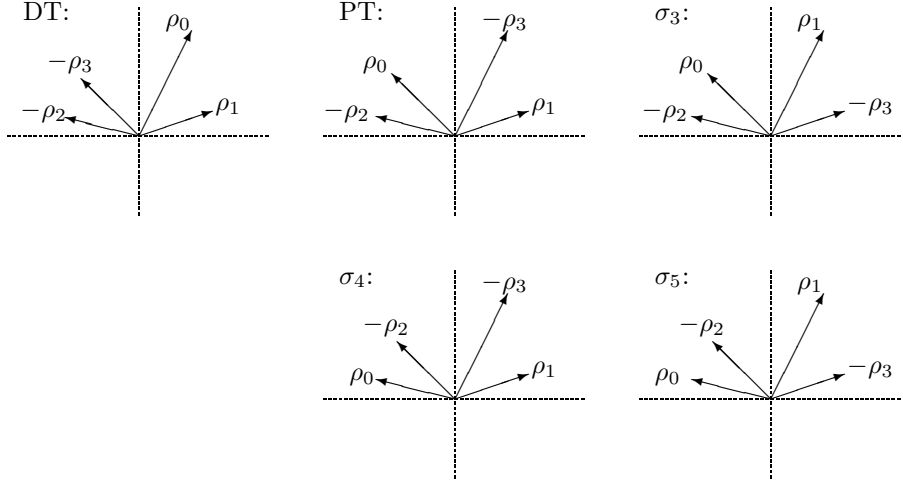
\begin{figure*}[h]
\centering
\setlength{\unitlength}{0.7mm}
\begin{picture}(170,90) %original size was (50,40)
\multiput(0,65)(1,0){50}{\line(1,0){0.5}}
\multiput(25,50)(0,1){40}{\line(0,1){0.5}}
\put(25,65){\vector(-4,1){14}}
\put(2.5,68.6){$-\rho_2$}
\put(25,65){\vector(-1,1){11}}
\put(7.5,78){$-\rho_3$}
\put(25,65){\vector(1,2){10}}
\put(30,86){$\rho_0$}
\put(25,65){\vector(3,1){14}}
\put(39.5,70){$\rho_1$}
\put(3,87){DT:}

\multiput(60,65)(1,0){50}{\line(1,0){0.5}}
\multiput(85,50)(0,1){40}{\line(0,1){0.5}}
\put(85,65){\vector(-4,1){15}}
\put(60,68.6){$-\rho_2$}
\put(85,65){\vector(-1,1){12}}
\put(67.5,78){$\rho_0$}
\put(85,65){\vector(1,2){10}}
\put(90,86){$-\rho_3$}
\put(85,65){\vector(3,1){14}}
\put(99.5,70){$\rho_1$}
\put(63,87){PT:}

\multiput(120,65)(1,0){50}{\line(1,0){0.5}}
\multiput(145,50)(0,1){40}{\line(0,1){0.5}}
\put(145,65){\vector(-4,1){15}}
\put(120.5,68.6){$-\rho_2$}
\put(145,65){\vector(-1,1){12}}
\put(127.5,78){$\rho_0$}
\put(145,65){\vector(1,2){10}}
\put(150,86){$\rho_1$}
\put(145,65){\vector(3,1){14}}
\put(159.5,70){$-\rho_3$}
\put(123,87){$\sigma_3$:}

\multiput(60,15)(1,0){50}{\line(1,0){0.5}}
\multiput(85,0)(0,1){40}{\line(0,1){0.5}}
\put(85,15){\vector(-4,1){15}}
\put(65,18.6){$\rho_0$}
\put(85,15){\vector(-1,1){11}}
\put(67.5,28){$-\rho_2$}
\put(85,15){\vector(1,2){10}}
\put(90,36){$-\rho_3$}
\put(85,15){\vector(3,1){14}}
\put(99.5,20){$\rho_1$}
\put(63,37){$\sigma_4$:}

\multiput(120,15)(1,0){50}{\line(1,0){0.5}}
\multiput(145,0)(0,1){40}{\line(0,1){0.5}}
\put(145,15){\vector(-4,1){15}}
\put(123,18.6){$\rho_0$}
\put(145,15){\vector(-1,1){11}}
\put(127.5,28){$-\rho_2$}
\put(145,15){\vector(1,2){10}}
\put(150,36){$\rho_1$}
\put(145,15){\vector(3,1){14}}
\put(159.5,20){$-\rho_3$}
\put(123,37){$\sigma_5$:}
\end{picture}

\caption{Configurations of the $\rho_i$ for five polynomial stabilities on a threefold}
\label{figure-PTstab}
\end{figure*}

Note that the dual stability vectors for PT differ from the stability vectors of $\sigma_5$ by a rotation of the complex plane (i.e.\ a $\widetilde{GL}^+(2,\RR)$-action), as is the case for $\sigma_4$ and $\sigma_3$. As a consequence, the semistable objects with respect to PT-stability are dual to those with respect to  $\sigma_5$-stability up to shift, and similarly for  $\sigma_3$-stability and $\sigma_4$-stability.  Overall, up to shifting and taking derived duals of the semistable objects, there are only three distinct moduli spaces (up to isomorphism) for standard polynomial stabilities whose stability vectors $\rho_i$ satisfy condition V1 above, given by DT, PT and $\sigma_3$-stabilities.

In Section \ref{subsection-V2char}, we will show (Corollary \ref{coro-PT4equiv}) that under a coprime assumption on degree and rank, PT-stability and $\sigma_4$-stability are equivalent.  This implies that there are only two distinct moduli of semistable objects with respect to standard polynomial stabilities, up to taking derived dual: the moduli of DT-stable objects, and the moduli of PT-stable objects.

For convenience, let us introduce two more conditions on the stability vectors $\rho_i$ of a polynomial stability on a threefold below.  For a complex number $\rho$ lying on the upper half plane, let $\phi (\rho) \in (0,1]$ denote its phase.
\begin{itemize}
\item[V2.] The perversity function is $p(d)=-\lfloor \frac{d}{2} \rfloor$ (this satisfies  condition V0), condition V1 is satisfied, the phases  $\phi (\rho_0), \phi (-\rho_2)$ are both larger than $\phi (-\rho_3)$, and $\phi (-\rho_3) > \phi (\rho_1)$.
\item[V3.] The perversity function is $p(d)=-\lfloor \frac{d}{2} \rfloor$ (this satisfies  condition V0), condition V1 is satisfied, and the phases $\phi (\rho_0), \phi (\rho_1), \phi (-\rho_2)$ are all larger than $\phi (-\rho_3)$.
\end{itemize}
 Note that V3 is equivalent to:
\begin{itemize}
\item[V3'.] The perversity function is $p(d)=-\lfloor \frac{d}{2} \rfloor$ (this satisfies  condition V0), condition V1 is satisfied, and the phases $\phi (\rho_0), \phi (-\rho_2)$ are both larger than $\phi (\rho_1)$, which is  larger than $\phi (-\rho_3)$.
\end{itemize}
Note that PT and $\sigma_4$-stabilities both satisfy condition V2, while $\sigma_3$ and $\sigma_5$-stabilities both satisfy condition V3.  On the other hand, for $p (d) = -\lfloor \frac{d}{2} \rfloor$, an object $E \in \Ap$ is semistable with respect to $\sigma$, where $\sigma$ satisfies condition V2 (resp.\ V3), if and only if $E$ is semistable with respect to PT or $\sigma_4$ (resp.\ $\sigma_3$ or $\sigma_5$).

\section{Characterising Polynomial Stable Objects}\label{section-stabobjchar}

\begin{remark}\label{rmk-observation1}
   Here is a simple observation (made in \cite{LiQin}, for instance): let $p=-\lfloor \frac{d}{2} \rfloor$ be as above, so that $\Ap = \langle \Coh_{\leq 1}(X),\Coh_{\geq 2}(X)[1]\rangle$.  Take any complex $E \in \Ap$ with torsion-free $H^{-1}(E)$, and suppose $T$ is the cokernel of the canonical map $H^{-1}(E) \to H^{-1}(E)^{\ast \ast}$.  Then we have the short exact sequence of coherent sheaves
\[
  0 \to H^{-1}(E) \to H^{-1}(E)^{\ast \ast} \to T \to 0.
\]
On a smooth projective threefold $X$, the dimension of the support of $T$ is at most 1, giving us the short exact sequence in $\Ap$
\begin{equation}\label{es1}
  0 \to T \to H^{-1}(E)[1] \to H^{-1}(E)^{\ast \ast}[1] \to 0.
\end{equation}
Since we also have the canonical short exact sequence in $\Ap$
\[
 0 \to H^{-1}(E)[1] \to E \to H^0(E) \to 0,
\]
we see that $T$ is a subobject of $E$ in $\Ap$.
\end{remark}

\subsection{Stable objects for stabilities satisfying V3}

\begin{lemma}\label{lemma-Cstablereflexive}
Suppose $\sigma$ is a  polynomial stability on $X$ satisfying condition $V3$.  If $E \in \Ap$ is a $\sigma$-semistable object of nonzero rank, then $H^{-1}(E)$ is a reflexive $\mu$-semistable sheaf.
\end{lemma}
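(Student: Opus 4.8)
The plan is to exploit the description $\Ap = \langle \Coh_{\leq 1}(X), \Coh_{\geq 2}(X)[1]\rangle$: an object $E \in \Ap$ of nonzero rank is a two-term complex with $F := H^{-1}(E) \in \Coh_{\geq 2}(X)$ of positive rank and $H^0(E) \in \Coh_{\leq 1}(X)$. The guiding principle is a phase comparison. For $m \gg 0$ the central charge $Z_\sigma(E)(m)$ is dominated by its $m^3$-term, whose argument is $\phi(-\rho_3)$ (because $ch_0(E) = -\rank(F) < 0$), so every nonzero-rank object of $\Ap$ has limiting phase $\phi(-\rho_3)$; meanwhile condition V3 guarantees $\phi(\rho_0), \phi(\rho_1), \phi(-\rho_2)$ are all strictly larger than $\phi(-\rho_3)$. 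Thus any subobject of $E$ built from a sheaf of pure dimension $2$, $1$, or $0$ will have the larger limiting phase $\phi(-\rho_2)$, $\phi(\rho_1)$, or $\phi(\rho_0)$ respectively, and would destabilise $E$. The argument consists of producing such forbidden subobjects.

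First I would show $F$ is torsion-free. Since $F \in \Coh_{\geq 2}(X)$, its torsion subsheaf $W$, if nonzero, is supported in dimension exactly $2$. The sequence $0 \to W \to F \to F/W \to 0$ has torsion-free quotient, hence yields a short exact sequence $0 \to W[1] \to F[1] \to (F/W)[1] \to 0$ in $\Ap$; composing with the canonical inclusion $F[1] \hookrightarrow E$ exhibits $W[1]$ as a subobject of $E$. A nonzero $W$ has $\omega^2\cdot ch_1(W) > 0$, so $Z_\sigma(W[1])(m)$ is dominated by its $m^2$-term and $W[1]$ has limiting phase $\phi(-\rho_2)$; as $\phi(-\rho_2) > \phi(-\rho_3)$ we get $\phi(E) \prec \phi(W[1])$, contradicting $\sigma$-semistability. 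Hence $W = 0$ and $F$ is torsion-free.

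With $F$ torsion-free, Remark \ref{rmk-observation1} applies: the cokernel $T$ of the canonical map $F \to F^{\ast\ast}$ is supported in dimension $\leq 1$ and is a subobject of $E$ in $\Ap$. If $T$ has a $1$-dimensional component its limiting phase is $\phi(\rho_1)$, and if $T$ is $0$-dimensional its limiting phase is $\phi(\rho_0)$; either way V3 gives $\phi(E) \prec \phi(T)$, again contradicting semistability. Therefore $T = 0$, i.e.\ $F \cong F^{\ast\ast}$ is reflexive.

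The main obstacle is $\mu$-semistability, because here the competing subobjects share the limiting phase of $E$. Given a saturated subsheaf $F' \subset F$ with $0 < \rank F' < \rank F$, the sequence $0 \to F'[1] \to F[1] \to (F/F')[1] \to 0$ lies in $\Ap$ and makes $F'[1]$ a subobject of $E$. Both $F'[1]$ and $E$ have limiting phase $\phi(-\rho_3)$, so the comparison is decided not by the $m^3$-term but by the next coefficient of $Z_\sigma(\cdot)(m)$. A direct computation of the two leading coefficients---in which the unipotent contribution from $U$ enters $F'[1]$ and $E$ identically and so cancels---shows that for $m \gg 0$ the sign of $\phi(F'[1])(m) - \phi(E)(m)$ equals the sign of $\mathrm{Im}(\rho_2/\rho_3)\cdot(\mu(F') - \mu(F))$, where $\mu(-) = (\omega^2 \cdot ch_1(-))/\rank(-)$. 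Since $\rho_2/\rho_3$ lies in the upper half plane, $\mathrm{Im}(\rho_2/\rho_3) > 0$; thus $\sigma$-semistability forces $\mu(F') \leq \mu(F)$ for every such $F'$, which is precisely $\mu$-semistability of $F$. Verifying that the subleading coefficients genuinely govern the phase ordering, and that the $U$-term cancels, is the technical heart of the proof.
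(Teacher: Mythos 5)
Your proposal is correct and takes essentially the same route as the paper's proof: torsion-freeness of $H^{-1}(E)$ from $\phi(-\rho_2) > \phi(-\rho_3)$, reflexivity by exhibiting the cokernel $T$ of $H^{-1}(E) \to H^{-1}(E)^{\ast\ast}$ as a destabilising subobject via Remark \ref{rmk-observation1} together with $\phi(\rho_0), \phi(\rho_1) > \phi(-\rho_3)$, and $\mu$-semistability from the comparison of subleading coefficients of the central charge, which is exactly what the paper's terse final sentence ($\mu$-semistability ``follows from $\phi(-\rho_2) > \phi(-\rho_3)$'') encodes, since that phase inequality is equivalent to $\mathrm{Im}(\rho_2/\rho_3) > 0$. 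The only difference is one of detail: you spell out the asymptotic phase expansion (including the cancellation of the unipotent term $U$) that the paper leaves implicit.
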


\begin{proof}
Note that, since $\phi (-\rho_2) > \phi (-\rho_3)$, the $\sigma$-semistability of $E$ implies $H^{-1}(E)$ is torsion-free.  By Remark \ref{rmk-observation1},  if $T$ is a nonzero sheaf, then $T$ would destabilise $E$ in $\Ap$ since $\phi (\rho_0), \phi (\rho_1) > \phi (-\rho_3)$.  Hence $T$ must be zero, meaning $H^{-1}(E)$ is a reflexive sheaf.  That $H^{-1}(E)$ is $\mu$-semistable follows from $\phi (-\rho_2) > \phi (-\rho_3)$.
\end{proof}

\begin{lemma}\label{lemma-dualtrianglelesanal}
Suppose $\sigma$ is a  polynomial stability on $X$ satisfying condition $V3$.  Suppose also that $E \in \Ap$ is a $\sigma$-semistable object of nonzero rank, and $H^0(E)$ is a 0-dimensional sheaf.  Then $H^1(E^\vee) \cong H^{-1}(E)^\ast$, we have the short exact sequence of coherent sheaves
\begin{equation}\label{eq-lemma-dualtrianglelesanal-1}
  0 \to H^2(E^\vee) \to \EExt^1 (H^{-1}(E),\OO_X) \to \EExt^3 (H^0(E),\OO_X) \to 0,
\end{equation}
and $H^3(E^\vee)=0$.  In particular, if $H^{-1}(E)$ is locally free, then $H^0(E)$ vanishes, and $E\cong H^{-1}(E)[1]$.
\end{lemma}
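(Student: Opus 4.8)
The plan is to analyse the derived dual $E^\vee$ via the canonical truncation triangle of $E$ and its dual. Write $F = H^{-1}(E)$ and $G = H^0(E)$, so that $E$ sits in a triangle $F[1] \to E \to G \to F[2]$. By Lemma \ref{lemma-Cstablereflexive} the sheaf $F$ is reflexive, so on the smooth threefold $X$ one has $\EExt^i(F,\OO_X) = 0$ for $i \geq 2$ while $\EExt^1(F,\OO_X)$ is supported in dimension $0$; since $G$ is $0$-dimensional, $\EExt^i(G,\OO_X) = 0$ for $i \neq 3$ and $\EExt^3(G,\OO_X)$ is again $0$-dimensional. First I would apply $(-)^\vee$ to the triangle and read off the long exact sequence of cohomology sheaves, which yields $H^0(E^\vee) = 0$, the isomorphism $H^1(E^\vee) \cong F^\ast = H^{-1}(E)^\ast$, and the four-term exact sequence
\[ 0 \to H^2(E^\vee) \to \EExt^1(F,\OO_X) \xrightarrow{g} \EExt^3(G,\OO_X) \to H^3(E^\vee) \to 0. \]
The first isomorphism and $H^0(E^\vee)=0$ are then immediate, and the remaining two assertions both amount to the single claim $H^3(E^\vee) = 0$, i.e. that $g$ is surjective.

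The heart of the argument is this vanishing, and it is where semistability must enter: it genuinely fails without it, since for $E = F[1] \oplus G$ with $G \neq 0$ one has $H^3(E^\vee) = \EExt^3(G,\OO_X) \neq 0$. Suppose $Q := H^3(E^\vee) \neq 0$ and set $S := \EExt^3(Q,\OO_X)$, a nonzero $0$-dimensional sheaf. I would dualise the truncation triangle $\tau^{\leq 1}E^\vee \to E^\vee \to \tau^{\geq 2}E^\vee$ to obtain a triangle $P \to E \to R \to P[1]$, where $R := (\tau^{\leq 1}E^\vee)^\vee$ and $P := (\tau^{\geq 2}E^\vee)^\vee$. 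Because $H^0(E^\vee) = 0$, one has $\tau^{\leq 1}E^\vee = F^\ast[-1]$, and since $F^\ast$ is reflexive a direct computation gives $R = (F^\ast)^\vee[1] \in \Ap$: its cohomology sheaves are $H^{-1}(R) = F^{\ast\ast} = F \in \Coh_{\geq 2}(X)$ and the $0$-dimensional sheaf $H^0(R) = \EExt^1(F^\ast,\OO_X) \in \Coh_{\leq 1}(X)$. On the other side, the cohomology sheaves of $P$ are the $0$-dimensional sheaves $H^0(P) = S$ and $H^1(P) = \EExt^3(H^2(E^\vee),\OO_X)$, so that $\mathcal{H}^0_{\Ap}(P) = S$.

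The main obstacle is precisely the verification that $S$ embeds into $E$ as a genuine subobject of $\Ap$; a naive attempt to split off $Q$ alone (dualising $\tau^{\geq 3}E^\vee$) fails, because the relevant cone acquires an unwanted degree-$1$ cohomology sheaf. This is exactly what the computation $R \in \Ap$ repairs: feeding the triangle $P \to E \to R$ into the long exact sequence of $\Ap$-cohomology and using $\mathcal{H}^{-1}_{\Ap}(R) = 0$ produces the exact sequence in $\Ap$
\[ 0 \to S \to E \to R \to \EExt^3(H^2(E^\vee),\OO_X) \to 0, \]
exhibiting the nonzero $0$-dimensional sheaf $S$ as a subobject of $E$. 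A $0$-dimensional sheaf has constant phase $\phi(\rho_0)$, whereas the leading term of $Z_\sigma(E)(m)$ points along $-\rho_3$, so $\phi(E)(m) \to \phi(-\rho_3)$ as $m \to \infty$; condition V3 gives $\phi(\rho_0) > \phi(-\rho_3)$, hence $S$ destabilises $E$, contradicting semistability. Therefore $Q = 0$, which proves $H^3(E^\vee) = 0$ and upgrades the four-term sequence to the asserted short exact sequence.

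Finally, the last assertion follows immediately: if $F = H^{-1}(E)$ is locally free then $\EExt^1(F,\OO_X) = 0$, so the short exact sequence forces both $H^2(E^\vee) = 0$ and $\EExt^3(G,\OO_X) = 0$; the latter gives $G = H^0(E) = 0$ by local duality for $0$-dimensional sheaves, whence $E \cong H^{-1}(E)[1]$.
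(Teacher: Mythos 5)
Your proof is correct, and its skeleton --- dualizing the canonical triangle \eqref{triangle-EApcanonical}, reading off the long exact sequence of cohomology sheaves using the reflexivity of $F=H^{-1}(E)$ and the $0$-dimensionality of $G=H^0(E)$, and reducing all claims to the single vanishing $H^3(E^\vee)=0$ --- is exactly the paper's. Where you genuinely diverge is in how that vanishing is proved. The paper argues on the dual side: since $\phi(\rho_0)>\phi(-\rho_3)$, semistability forces $\Hom_{D^b(X)}(\OO_x,E)=0$ for every closed point $x$; dualizing gives $\Hom(E^\vee,\OO_x^\vee)=\Hom(E^\vee,\OO_x[-3])=0$, and since the cohomology of $E^\vee$ is concentrated in degrees $\leq 3$, a nonzero top cohomology sheaf $H^3(E^\vee)$ would map nontrivially onto some skyscraper, producing a nonzero map $E^\vee\to\OO_x[-3]$ --- contradiction. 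You instead work on the primal side: assuming $Q=H^3(E^\vee)\neq 0$, you dualize the truncation triangle $\tau^{\leq 1}E^\vee\to E^\vee\to\tau^{\geq 2}E^\vee$, check that $R=(F^\ast)^\vee[1]$ actually lies in $\Ap$ (using reflexivity of $F^\ast$ so that $H^{-1}(R)=F^{\ast\ast}=F$ is torsion-free and $H^0(R)=\EExt^1(F^\ast,\OO_X)$ is $0$-dimensional), and then extract from the $\Ap$-cohomology sequence of $P\to E\to R$ the exact sequence $0\to S\to E\to R\to \EExt^3(H^2(E^\vee),\OO_X)\to 0$, exhibiting the nonzero $0$-dimensional sheaf $S=\EExt^3(Q,\OO_X)$ as a subobject of $E$ in $\Ap$, which destabilizes since $\phi(\rho_0)>\phi(-\rho_3)$. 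Both routes consume the same stability input; the paper's is shorter and avoids the bookkeeping of verifying membership in $\Ap$, whereas yours is more constructive: it produces the actual destabilizing subobject inside the heart (very much in the spirit of Remark \ref{rmk-observation1}, where the destabilizer $T$ is built by sheaf-theoretic dualizing), and your observation that $E=F[1]\oplus G$ violates the conclusion makes transparent that semistability is not a removable hypothesis. Your handling of the final clause --- local freeness of $F$ kills $\EExt^1(F,\OO_X)$, hence $\EExt^3(G,\OO_X)$ by the surjectivity in \eqref{eq-lemma-dualtrianglelesanal-1}, hence $G$ itself --- agrees with what the paper's sequence gives.
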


\begin{proof}
Given any object $E \in \Ap$, we can dualize the canonical exact triangle
\begin{equation}\label{triangle-EApcanonical}
  H^{-1}(E)[1] \to E \to H^0(E) \to H^{-1}(E)[2]
\end{equation}
to obtain
\begin{equation}\label{triangle-Edualised}
  H^0(E)^\vee \to E^\vee \to H^{-1}(E)^\vee [-1] \overset{\delta}{\to} H^0(E)^\vee [1].
\end{equation}
Since $H^i(H^0(E)^\vee) \cong \EExt^i (H^0(E),\OO_X)$, and $H^0(E)$ is 0-dimensional, $H^i (H^0(E)^\vee)$ is nonzero only when $i=3$.  On the other hand, \[
H^i (H^{-1}(E)^\vee[-1]) \cong H^{i-1}(H^{-1}(E)^\vee)\cong \EExt^{i-1}(H^{-1}(E),\OO_X)
\]
 is zero whenever $i\neq 1,2$; this is because, by Lemma \ref{lemma-Cstablereflexive}, $H^{-1}(E)$ is reflexive, and hence has homological dimension at most 1.  The lemma would follow by taking the long exact sequence of cohomology of the exact triangle \eqref{triangle-Edualised}, provided that $H^3(E^\vee)=0$.  Note that the cohomology of $E^\vee$ is concentrated in degrees 1, 2 and 3, since it is an extension of $H^{-1}(E)^\vee[-1]$ by $H^0(E)^\vee$.

Since $\phi (\rho_0) > \phi (-\rho_3)$ for $\sigma$, for any closed point $x \in X$, we have the vanishing of $\Hom_{D^b(X)}(\OO_x, E)$ where $\OO_x$ denotes the skyscraper sheaf with value $k$ supported at $x$.  Thus $0=\Hom (E^\vee, \OO_x^\vee)=\Hom (E^\vee, \OO_x[-3])$ for any $x \in X$.  Since we observed that the highest-degree cohomology of $E^\vee$ is at degree 3, this implies $H^3(E^\vee)=0$, and the lemma follows.
\end{proof}

\begin{lemma}[Boundedness]\label{lemma-boundednesseveryone}
Let $ch$ be any fixed Chern character, and let $\sigma$ be any of the following five stabilities on $X$: DT, PT, $\sigma_3$, $\sigma_4$ or $\sigma_5$.  Then the set of $\sigma$-semistable objects in $\Ap$ with Chern character $ch$ is bounded.
\end{lemma}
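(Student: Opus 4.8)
The plan is to reduce the boundedness of the set of $\sigma$-semistable objects to the boundedness of the two families of cohomology sheaves $\{H^{-1}(E)\}$ and $\{H^0(E)\}$, treating all five stabilities at once. Since every $E\in\Ap$ sits in the canonical triangle \eqref{triangle-EApcanonical}, it is recovered as the cone of a class in $\Hom_{D^b(X)}(H^0(E)[-1],H^{-1}(E)[1])=\Ext^2(H^0(E),H^{-1}(E))$. Hence once both cohomology families are bounded, say by finite-type parameter schemes carrying universal sheaves, the relative $\EExt^2$ of these two universal sheaves is a coherent sheaf over the (finite-type) product of the two bases, and a finite-type scheme over that product carries a universal cone realising every such $E$; this exhibits $\{E\}$ as bounded. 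So it suffices to bound the cohomology sheaves.

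Next I would bound $\{H^{-1}(E)\}$. Because $E\in\Ap=\langle\Coh_{\leq 1}(X),\Coh_{\geq 2}(X)[1]\rangle$, we have $H^{-1}(E)\in\Coh_{\geq 2}(X)$ and $H^0(E)\in\Coh_{\leq 1}(X)$, and in $K$-theory $ch(E)=ch(H^0(E))-ch(H^{-1}(E))$; since $H^0(E)$ is supported in dimension $\leq 1$, the classes $ch_0(H^{-1}(E))$ and $ch_1(H^{-1}(E))$ are pinned down by the fixed $ch$. For every one of the five configurations one has $\phi(-\rho_2)>\phi(-\rho_3)$ (forced by $\rho_2/\rho_3$ lying in the upper half plane), and repeating the argument in the proof of Lemma \ref{lemma-Cstablereflexive} shows $H^{-1}(E)$ is torsion-free and $\mu$-semistable: a $2$-dimensional torsion subsheaf, or a slope-destabilising subsheaf $F$, produces a subobject $F[1]\hookrightarrow H^{-1}(E)[1]\hookrightarrow E$ whose phase exceeds $\phi(E)$, because the $m^3$- and $m^2$-coefficients of $Z_\sigma(E)$ agree with those of $Z_\sigma(H^{-1}(E)[1])$ (the sheaf $H^0(E)$ contributes only in degrees $\leq 1$ in $m$). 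Thus $H^{-1}(E)$ is $\mu$-semistable of fixed rank and $c_1$; its discriminant satisfies the Bogomolov--Langer inequality $\Delta(H^{-1}(E))\cdot\omega\geq 0$, giving a fixed lower bound for $c_2(H^{-1}(E))\cdot\omega$. For the upper bound, note $\int_X ch_2(H^0(E))\cdot\omega\geq 0$ since $H^0(E)$ carries an effective class in dimension $\leq 1$, so $\int_X ch_2(H^{-1}(E))\cdot\omega=\int_X ch_2(H^0(E))\cdot\omega-\int_X ch_2(E)\cdot\omega$ is bounded below, i.e.\ $c_2(H^{-1}(E))\cdot\omega$ is bounded above. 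With fixed rank, fixed $c_1$, and $\Delta\cdot\omega$ confined to a bounded interval, Langer's boundedness theorem for $\mu$-semistable sheaves makes $\{H^{-1}(E)\}$ bounded. (When $ch_0(E)=0$ the sheaf $H^{-1}(E)$ is pure of dimension $2$, and the same conclusion follows instead from Simpson's boundedness of semistable sheaves of fixed reduced Hilbert polynomial.)

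Finally I would bound $\{H^0(E)\}$. Boundedness of $\{H^{-1}(E)\}$ forces $ch(H^{-1}(E))$, in particular $ch_2$ and $ch_3$, to take finitely many values; as $ch(E)$ is fixed, $ch_2(H^0(E))$ and $ch_3(H^0(E))$ are bounded, hence so is the Hilbert polynomial of $H^0(E)$. Since $H^0(E)\in\Coh_{\leq 1}(X)$, a family of sheaves supported in dimension at most $1$ with bounded Hilbert polynomial is bounded (the $1$-dimensional supports have bounded degree and the sheaves have bounded Euler characteristic), so $\{H^0(E)\}$ is bounded; combined with the first paragraph this gives boundedness of $\{E\}$. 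I expect the main obstacle to be the control of $c_2(H^{-1}(E))$: establishing $\mu$-semistability uniformly across all five configurations so that the Bogomolov--Langer inequality applies, and then pairing it with the elementary positivity $\int_X ch_2(H^0(E))\cdot\omega\geq 0$ to confine $c_2$ to a bounded interval, is the crux. Once $H^{-1}(E)$ is controlled, the remaining Chern data and the reassembly of the complex from its two cohomology sheaves are routine.
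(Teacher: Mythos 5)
Your reduction to bounding the two cohomology families, and your argument that $H^{-1}(E)$ is torsion-free and $\mu$-semistable with fixed $ch_0$, $ch_1$ and with $ch_2(H^{-1}(E))\cdot\omega$ confined to a bounded interval, are fine (and do work uniformly for all five stabilities, since $\rho_2/\rho_3$ lies in the upper half plane in every configuration). The fatal step is the appeal to ``Langer's boundedness theorem'': it is \emph{false} that $\mu$-semistable torsion-free sheaves with fixed rank, fixed $c_1$, and $\Delta\cdot\omega$ in a bounded interval form a bounded family. Every boundedness theorem of Maruyama--Langer type requires control of the full Hilbert polynomial, which on a threefold also involves $ch_3$. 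Concretely, the ideal sheaves $I_W\subset\OO_{\PP^3}$ of $0$-dimensional subschemes $W$ of arbitrary length are $\mu$-stable and torsion-free with $r=1$, $c_1=0$, $ch_2=0$ (so $\Delta\cdot\omega=0$ identically), yet $ch_3(I_W)=-\mathrm{length}(W)$ is unbounded, so this family is unbounded. Because of this, your Step 3 is circular: you deduce boundedness of $ch_3(H^0(E))$ from boundedness of $\{H^{-1}(E)\}$, but the latter cannot be established without first bounding $ch_3(H^{-1}(E))=ch_3(H^0(E))-ch_3(E)$ from both sides. How the fixed number $ch_3(E)$ gets split between the two cohomology sheaves is precisely the content of the lemma, not a routine afterthought; ruling out objects in which both $ch_3(H^{-1}(E))$ and $ch_3(H^0(E))$ are enormous requires using $\sigma$-semistability homologically (e.g.\ vanishings such as $\Hom_{D^b(X)}(\Coh_{\leq 0}(X),E)=0$ constraining the extension class of $E$), not merely the $\mu$-semistability of $H^{-1}(E)$ together with numerical positivity.

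A second, related error: sheaves supported in dimension $\leq 1$ with bounded Hilbert polynomial do \emph{not} form a bounded family either (on $\PP^3$, the sheaves $\OO_L(n)\oplus\OO_{L'}(-n)$ all have Hilbert polynomial $2m+2$); one needs purity or semistability, or $0$-dimensionality together with bounded length. This matters because for $\sigma_3$ and $\sigma_5$ (condition V3) the sheaf $H^0(E)$ genuinely can be $1$-dimensional, so even the trivial lower bound $ch_3(H^0(E))\geq 0$ that you implicitly rely on is unavailable there. This is exactly why the paper's own proof is structured as it is: it quotes the classical result for DT and \cite[Proposition 3.4]{Lo1} for PT (where $H^0(E)$ is $0$-dimensional and the real work is bounding its length), observes that the same proof applies verbatim to $\sigma_4$, and then obtains $\sigma_5$ and $\sigma_3$ by applying the derived dual --- so that the stabilities whose semistable objects can have $1$-dimensional $H^0$ are never treated directly --- with rank-zero objects handled via Simpson semistability. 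In short, your diagnosis that the $c_2$ bound is the crux is off: the crux is $ch_3$, and your proof never controls it.
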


\begin{proof}
Boundedness for DT-semistable objects is a classical result, while boundedness for PT-semistable objects was shown in \cite[Proposition 3.4]{Lo1}.  By taking dual, we have boundedness for $\sigma_5$-semistable objects as well.  The proof of \cite[Proposition 3.4]{Lo1} works for $\sigma_4$-semistable objects of nonzero rank without change; by taking dual, we also have boundedness for $\sigma_3$-semistable objects of nonzero rank.  On the other hand, $\sigma_3$-semistable objects of rank zero are Simpson-semistable sheaves, so we have boundedness for them; by taking dual, we have boundedness for $\sigma_4$-semistable objects of rank zero.
\end{proof}

\begin{lemma}\label{lemma-coprimecharCst}
Let $E \in \Ap$ be an object of nonzero rank, with relatively prime degree and rank.  Let $\sigma$ be a polynomial stability satisfying condition V3.  If $E$ satisfies the following conditions:
\begin{itemize}
\item $H^{-1}(E)$ is $\mu$-stable;
\item $\Hom_{D^b(X)}(\Coh_{\leq 1}(X),E)=0$,
\end{itemize}
then $E$ is $\sigma$-stable.
\end{lemma}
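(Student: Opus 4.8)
The plan is to test stability directly against every short exact sequence $0 \to F \to E \to Q \to 0$ in $\Ap$ with $0 \neq F \subsetneq E$, and to show $\phi(F) \prec \phi(E)$. Writing out the long exact cohomology sequence
\[
0 \to H^{-1}(F) \to H^{-1}(E) \to H^{-1}(Q) \to H^0(F) \to H^0(E) \to H^0(Q) \to 0,
\]
the first observation is that $H^{-1}(F) \neq 0$: if $H^{-1}(F)=0$ then $F=H^0(F) \in \Coh_{\leq 1}(X)$, and the inclusion $F \hookrightarrow E$ would be a nonzero element of $\Hom_{D^b(X)}(\Coh_{\leq 1}(X),E)$, contradicting the second hypothesis, so $F=0$, which is excluded. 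Since $H^{-1}(E)$ is $\mu$-stable and hence torsion-free, $H^{-1}(F) \subseteq H^{-1}(E)$ is then a subsheaf of rank $r_F := \rank H^{-1}(F) \geq 1$.

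For the phase comparison I would use the standard criterion that $\phi(F) \prec \phi(E)$ is equivalent to $\mathrm{Im}\big(\overline{Z_\sigma(E)(m)}\,Z_\sigma(F)(m)\big) < 0$ for $m \gg 0$. Both $E$ and $F$ have nonzero rank, so the top-order ($m^6$) term of this imaginary part vanishes, and the leading ($m^5$) coefficient is $\mathrm{Im}(\overline{\rho_3}\rho_2)$ times $r\,\deg H^{-1}(F) - r_F\,\deg H^{-1}(E)$, where $r=\rank H^{-1}(E)$. A short check shows the unipotent operator $U=1+N$ does not affect this coefficient: its effect is to replace each degree term by that term plus a fixed rank-proportional multiple, which cancels in the antisymmetric rank--degree combination. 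Condition V3 gives $\phi(-\rho_2) > \phi(-\rho_3)$, whence $\mathrm{Im}(\overline{\rho_3}\rho_2) > 0$; so a strict slope inequality $\mu(H^{-1}(F)) < \mu(H^{-1}(E))$, i.e.\ $r\,\deg H^{-1}(F) - r_F\,\deg H^{-1}(E) < 0$, yields $\phi(F) \prec \phi(E)$.

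It then remains to produce this strict inequality, or otherwise conclude, according to $r_F$. If $r_F < r$, then $\mu$-stability of $H^{-1}(E)$ forces $\mu(H^{-1}(F)) < \mu(H^{-1}(E))$ and we finish by the previous paragraph. If $r_F = r$, the image of $H^{-1}(E) \to H^{-1}(Q)$ identifies the torsion quotient $G := H^{-1}(E)/H^{-1}(F)$ with a subsheaf of $H^{-1}(Q) \in \Coh_{\geq 2}(X)$, so $G \in \Coh_{\geq 2}(X)$. When $\deg H^{-1}(F) < \deg H^{-1}(E)$ we again have a strict slope inequality and conclude as above. When $\deg H^{-1}(F) = \deg H^{-1}(E)$, the sheaf $G$ lies in $\Coh_{\leq 1}(X)$, so the inclusion $G \hookrightarrow H^{-1}(Q)$ forces $G=0$ and hence $H^{-1}(F)=H^{-1}(E)$; the long exact sequence then embeds $H^{-1}(Q) \in \Coh_{\geq 2}(X)$ into $H^0(F) \in \Coh_{\leq 1}(X)$, giving $H^{-1}(Q)=0$, so $Q=H^0(Q)$ is a nonzero object of $\Coh_{\leq 1}(X)$. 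Its leading phase is $\phi(\rho_1)$ or $\phi(\rho_0)$, both strictly larger than $\phi(-\rho_3)$ by V3, while $E$ has leading phase $\phi(-\rho_3)$; hence $\phi(E) \prec \phi(Q)$, equivalently $\phi(F) \prec \phi(E)$.

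The main obstacle is the sign bookkeeping of the second paragraph: one must keep the conventions for $\rank$, $\deg$ and the shift $[1]$ consistent so that $\mu$-slope order on $H^{-1}$ matches the polynomial-phase order forced by V3, and verify (as sketched) that $U$ genuinely drops out of the decisive $m^5$ coefficient. I would note that the coprimality of $\deg$ and $\rank$ is not strictly required once $\mu$-stability is assumed outright---it only guarantees that no proper subsheaf of smaller rank can share the slope of $H^{-1}(E)$, which $\mu$-stability already provides---so it functions here mainly to keep the lemma aligned with the standing hypotheses of Theorem~\ref{pro-onlytwomoduli} and to make the argument degrade gracefully to the merely $\mu$-semistable case.
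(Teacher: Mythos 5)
Your proof is correct and follows essentially the same route as the paper: both argue over an arbitrary short exact sequence $0 \to F \to E \to Q \to 0$ in $\Ap$, kill the case $\rank H^{-1}(F)=0$ via the hypothesis $\Hom(\Coh_{\leq 1}(X),E)=0$, use $\mu$-stability of $H^{-1}(E)$ when $\rank H^{-1}(F) < \rank H^{-1}(E)$, and in the equal-rank case reduce to a quotient $Q$ whose leading phase exceeds $\phi(-\rho_3)$. The only differences are expository: you make explicit the $m^5$-coefficient computation (including that $U$ drops out) and split the equal-rank case by degree, where the paper splits by whether $H^{-1}(Q)$ is $2$-dimensional or zero; your closing remark that coprimality is not actually needed in this lemma is also accurate.
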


\begin{proof}
Take any short exact sequence $0 \to A \to E \to B \to 0$ in $\Ap$.  From this, we have the long exact sequence of cohomology
\[
0 \to H^{-1}(A) \to H^{-1}(E) \overset{\al}{\to} H^{-1}(B) \to H^0(A) \to H^0(E) \to H^0(B) \to 0.
\]
If $\rank (H^{-1}(A))=0$, then since $H^{-1}(E)$ is torsion-free, we have $H^{-1}(A)=0$, and so $A=H^0(A)$.  Then, the hypothesis that $\Hom (\Coh_{\leq 1}(X),E)=0$ implies $A=0$.   So let us suppose $\rank (H^{-1}(A)) \neq 0$.

If $\rank (H^{-1}(A)) < \rank (H^{-1}(E))$, then by the $\mu$-stability of $H^{-1}(E)$, we have $\phi (A) \prec \phi (E)$.  On the other hand, if $\rank (H^{-1}(A)) = \rank (H^{-1}(E))$, then we have $\rank (H^{-1}(B))=0$, which means either $H^{-1}(B)$ is 2-dimensional or it is zero.  If $H^{-1}(B)$ is a 2-dimensional sheaf, then we have $\phi (E) \prec \phi (B)$.  On the other hand, if $H^{-1}(B)=0$, then $B=H^0(B) \in \Coh_{\leq 1}(X)$, and we still have $\phi (E) \prec \phi (B)$.

Hence $E$ is $\sigma$-stable.
\end{proof}

\begin{coro}\label{coro-reflexivesareV2}
Let $\sigma$ be any polynomial stability on $X$ satisfying condition V3.  For any $\mu$-stable reflexive sheaf $F$ on $X$ with relatively prime degree and rank, $F[1]$ is $\sigma$-stable.
\end{coro}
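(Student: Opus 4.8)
The plan is to deduce this directly from Lemma \ref{lemma-coprimecharCst} applied to the object $E := F[1]$. First I would check that $F[1]$ lands in the right heart and carries the right invariants. Since $F$ is reflexive it is torsion-free, hence has no subsheaf supported in dimension $\leq 1$, so $F \in \Coh_{\geq 2}(X)$ and therefore $E = F[1] \in \Ap = \langle \Coh_{\leq 1}(X), \Coh_{\geq 2}(X)[1]\rangle$. Its cohomology sheaves are $H^{-1}(E) = F$ and $H^0(E) = 0$, so $E$ has nonzero rank $\rank F$, and since $ch(F[1]) = -ch(F)$ its degree and rank remain relatively prime. The first hypothesis of Lemma \ref{lemma-coprimecharCst}, that $H^{-1}(E) = F$ is $\mu$-stable, holds by assumption. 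Thus everything reduces to verifying the second hypothesis, namely $\Hom_{D^b(X)}(\Coh_{\leq 1}(X), F[1]) = 0$; once this is in hand, Lemma \ref{lemma-coprimecharCst} immediately yields that $F[1]$ is $\sigma$-stable.

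So the actual work is to show $\Hom_{D^b(X)}(G, F[1]) = \Ext^1(G, F) = 0$ for every $G \in \Coh_{\leq 1}(X)$. I would run the local-to-global Ext spectral sequence $H^p(X, \EExt^q(G, F)) \Rightarrow \Ext^{p+q}(G, F)$; the only graded pieces contributing to $\Ext^1$ are subquotients of $H^1(X, \HHom(G, F))$ and $H^0(X, \EExt^1(G, F))$. The first vanishes because $G$ is torsion while $F$ is torsion-free, so $\HHom(G, F) = \EExt^0(G, F) = 0$. Hence it suffices to prove the sheaf-level vanishing $\EExt^1(G, F) = 0$.

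The main obstacle — and the only place where reflexivity is genuinely used — is this last vanishing. I would argue locally. Because $\Supp G$ has codimension $\geq 2$, one has $\EExt^i(G, \OO_X) = 0$ for $i \leq 1$; at every point $x$ where $F$ is locally free this gives $\EExt^1(G, F)_x \cong \EExt^1(G, \OO_X)_x \otimes F_x = 0$ at once. The real content sits at the finitely many points where $F$ fails to be locally free (the singularity locus of a reflexive sheaf on a smooth threefold is $0$-dimensional). At such a point $x$ I would invoke that a reflexive sheaf on a smooth variety is $S_2$, so $F$ has depth $\geq 2$ along any closed subset of codimension $\geq 2$; by the standard relation between local Ext and grade (one has $\Ext^i_{\OO_{X,x}}(G_x, F_x) = 0$ for $i$ below the grade of the ideal of $\Supp G$ on $F_x$, and this grade is $\geq 2$ since $\dimension \OO_{X,x} \geq 2$ forces $\text{depth}\,F_x \geq 2$), we obtain $\EExt^1(G, F)_x = 0$. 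This yields $\EExt^1(G, F) = 0$ globally, hence $\Ext^1(G, F) = 0$, completing the verification of the Hom-vanishing and thereby the corollary. I expect the depth/$S_2$ step to be the crux: mere finiteness of homological dimension of $F$ is not enough (it would only embed $\EExt^1(G,F)$ into a kernel of a map between $\EExt^2(G,\OO_X)\otimes(-)$ terms that need not vanish), whereas the $S_2$ property is exactly what kills $\EExt^1$; the remaining steps are formal.
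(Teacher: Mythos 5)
Your proposal is correct, and its skeleton coincides with the paper's: both reduce, via Lemma \ref{lemma-coprimecharCst}, to proving $\Hom_{D^b(X)}(\Coh_{\leq 1}(X), F[1]) = 0$ (the membership $F[1]\in\Ap$, the $\mu$-stability of $H^{-1}(F[1])=F$, and the preservation of coprimality are exactly the routine checks you list). The difference lies entirely in how this vanishing is established. The paper does it in one line by Serre duality: for $A\in\Coh_{\leq 1}(X)$ one has $\Hom(A,F[1])\cong\Ext^1(A,F)\cong \Ext^2(F,A\otimes\omega_X)$ (up to dualizing), and the latter group vanishes for reflexive $F$ by the cited result \cite[Proposition 5]{PV}. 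You instead argue locally: the local-to-global spectral sequence reduces the question to $\HHom(A,F)=0$ (a torsion sheaf admits no nonzero map to a torsion-free sheaf) and $\EExt^1(A,F)=0$, and you deduce the latter from $\textnormal{grade}(\ann A, F)\geq 2$, which holds because $\Supp A$ has codimension $\geq 2$ and reflexive sheaves on a smooth variety satisfy $S_2$. Your route is more self-contained---it replaces the external citation by standard commutative algebra---and it isolates precisely which consequence of reflexivity is needed (the $S_2$/depth condition, not merely homological dimension $\leq 1$; your closing remark on this point is apt), whereas the paper's duality argument is shorter and requires no stratification by the singular locus of $F$. One phrasing caveat: the grade of the ideal $I$ of $\Supp A$ on $F_x$ equals $\min_{\pp\in V(I)}\textnormal{depth}\,F_\pp$, so you need depth $\geq 2$ at \emph{every} prime containing $I$---at height-$2$ primes this holds because the singular locus of $F$ is $0$-dimensional, and at closed points it is exactly $S_2$---not merely $\textnormal{depth}\,F_x\geq 2$ at the closed point, as your parenthetical suggests; your earlier formulation (``depth $\geq 2$ along any closed subset of codimension $\geq 2$'') is the correct one, so this is a slip of wording rather than a gap.
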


\begin{proof}
By Lemma \ref{lemma-coprimecharCst}, it suffices to show that $\Hom_{D^b(X)} (\Coh_{\leq 1}(X),F[1])=0$.  Let $A \in \Coh_{\leq 1}(X)$.  Then $\Hom (A,F[1])\cong \Ext^1 (A,F) \cong \Ext^2 (F,A \otimes \omega_X)$, which vanishes because $F$ is reflexive (see \cite[Proposition 5]{PV}).  Hence $F[1]$ is $\sigma$-stable.
\end{proof}

\begin{lemma}\label{lemma-nopure1dimsubobj}
Suppose an object $E \in \Ap$ of nonzero rank satisfies the following conditions:
\begin{enumerate}
\item $H^{-1}(E)$ is torsion-free and has homological dimension at most 1;
\item $H^0(E)$ is a 0-dimensional sheaf;
\item $H^2 (E^\vee)$ is 0-dimensional;
\item $H^3(E^\vee)=0$.
\end{enumerate}
Then $\Hom_{D^b(X)}(T,E)=0$ for any pure 1-dimensional sheaf $T$.
\end{lemma}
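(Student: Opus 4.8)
The plan is to pass to derived duals. Since $(-)^\vee = R\HHom(-,\OO_X)$ is a contravariant autoequivalence of $D^b(X)$ with $(-)^{\vee\vee}\cong \mathrm{id}$, I would first rewrite
\[
  \Hom_{D^b(X)}(T,E) \cong \Hom_{D^b(X)}(E^\vee, T^\vee),
\]
and then pin down the cohomology sheaves of $E^\vee$ and of $T^\vee$ so that there is no room for a nonzero morphism.

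For $E^\vee$ I would read off its cohomology from the long exact sequence of \eqref{triangle-Edualised}. By hypothesis (2), $H^0(E)$ is $0$-dimensional, so $H^0(E)^\vee \cong \EExt^3(H^0(E),\OO_X)[-3]$ is concentrated in degree $3$; by hypothesis (1), $H^{-1}(E)$ is torsion-free of homological dimension at most $1$, so $H^{-1}(E)^\vee[-1]$ has cohomology only in degrees $1$ and $2$, with $H^1 = H^{-1}(E)^\ast$ torsion-free. Thus $E^\vee$ a priori lives in degrees $1,2,3$ with $H^1(E^\vee)\cong H^{-1}(E)^\ast$, while degrees $2,3$ are governed by the exact sequence
\[
  0 \to H^2(E^\vee) \to \EExt^1(H^{-1}(E),\OO_X) \xrightarrow{H^2(\delta)} \EExt^3(H^0(E),\OO_X) \to H^3(E^\vee) \to 0.
\]
Hypotheses (3) and (4) then say exactly that $H^2(E^\vee)$ is $0$-dimensional and $H^3(E^\vee)=0$. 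Hence \emph{$E^\vee$ has cohomology concentrated in degrees $1$ and $2$}, with $H^1(E^\vee)$ torsion-free and $H^2(E^\vee)$ $0$-dimensional.

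For $T^\vee$ I would use the homological consequences of purity. Since $T$ is pure of dimension $1$ on the smooth threefold $X$, its support is pure $1$-dimensional and $T$ has no embedded points; localizing at any point of the support gives $\mathrm{depth}=\dim=1$, so $T$ is Cohen--Macaulay of codimension $2$ and (Auslander--Buchsbaum) has homological dimension $2$. Therefore $\EExt^i(T,\OO_X)=0$ for $i\neq 2$, i.e.\ $T^\vee \cong T'[-2]$, where $T':=\EExt^2(T,\OO_X)$ is again a pure $1$-dimensional sheaf (duality interchanges pure codimension-$2$ sheaves). Finally I would compute $\Hom(E^\vee, T'[-2])$ through the truncation triangle $H^1(E^\vee)[-1]\to E^\vee \to H^2(E^\vee)[-2]\to H^1(E^\vee)$. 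Applying $\Hom(-,T'[-2])$ produces the exact sequence
\[
  \Hom(H^2(E^\vee),T') \to \Hom(E^\vee, T'[-2]) \to \Ext^{-1}(H^1(E^\vee),T'),
\]
whose outer terms vanish: the left because $H^2(E^\vee)$ is $0$-dimensional and the pure sheaf $T'$ has no $0$-dimensional subsheaf, and the right because negative $\Ext$ of sheaves vanishes. Squeezed between zeros, $\Hom(E^\vee,T'[-2])=0$, hence $\Hom(T,E)=0$.

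I expect the main obstacle to be the homological input on pure sheaves, namely showing $T^\vee$ is concentrated in a single degree and that $T'$ is again pure; this is precisely where the purity of $T$ is essential, and it is what forces the $0$-dimensional sheaf $H^2(E^\vee)$ and the ``missing'' degree $3$ of $E^\vee$ to interact trivially with $T$. Once this is in place, everything else is routine bookkeeping with the truncation triangles.
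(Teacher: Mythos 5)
Your proposal is correct and follows essentially the same route as the paper's proof: dualize to get $\Hom_{D^b(X)}(T,E)\cong\Hom_{D^b(X)}(E^\vee,T^\vee)$, use hypotheses (1)--(4) to pin $E^\vee$ into degrees $1,2$ with $H^2(E^\vee)$ zero-dimensional, and conclude because a $0$-dimensional sheaf admits no nonzero map to the pure $1$-dimensional sheaf $\EExt^2(T,\OO_X)$. The only (harmless) difference is presentational: you kill the degree-$3$ cohomology of $T^\vee$ outright via the depth/Auslander--Buchsbaum argument and spell out the truncation-triangle bookkeeping, whereas the paper allows $H^3(T^\vee)$ to survive, cites \cite[Proposition 1.1.10]{HL} for purity of the dual, and compresses the bookkeeping into the single isomorphism $\Hom(E^\vee,T^\vee)\cong\Hom(H^2(E^\vee),H^2(T^\vee))$.
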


\begin{proof}
As in the proof of Lemma \ref{lemma-dualtrianglelesanal}, we can consider the exact triangle \eqref{triangle-Edualised} associated to $E$, and its long exact sequence of cohomology.      Since $H^0(E)$ is 0-dimensional by assumption, $H^i (H^0(E)^\vee) \cong \EExt^i (H^0(E),\OO_X)$ is nonzero only for $i=3$.  So $H^0(E)^\vee$ is a 0-dimensional sheaf sitting at degree 3.  On the other hand, $H^i (H^{-1}(E)^\vee [-1]) \cong \EExt^{i-1} (H^{-1}(E),\OO_X)$ is zero for $i-1 \geq 2$ (since $H^{-1}(E)$ has homological dimension at most 1 by assumption), and for $i-1 \leq -1$.  Hence $H^{-1}(E)^\vee [-1]$ is a complex with cohomology concentrated in degrees 1 and 2.  Since $E^\vee$ is an extension of $H^{-1}(E)^\vee [-1]$ by $H^0(E)^\vee$ in the derived category $D^b(X)$, $E^\vee$ itself has cohomology concentrated in degrees 1, 2 and 3. However, $H^3(E^\vee)=0$ by assumption, so the cohomology of $E^\vee$ is  concentrated in degrees 1 and 2.

For any pure 1-dimensional sheaf $T$, $H^i(T^\vee) \cong \EExt^i (T, \OO_X)$ is zero for $i \neq 2,3$.  So the cohomology of $T^\vee$ is concentrated in degrees 2 and 3.  Hence
\begin{align*}
  \Hom_{D^b(X)}(T,E) &\cong \Hom_{D^b(X)} (E^\vee, T^\vee) \\
  &\cong \Hom_{D^b(X)} (H^2(E^\vee), H^2(T^\vee)) \\
  & =0.
\end{align*}
where the last equality follows because $H^2(T^\vee) \cong \EExt^2 (T,\OO_X)$, the dual of $T$ in the sense of \cite[Definition 1.1.7]{HL}, is also pure 1-dimensional by \cite[Proposition 1.1.10]{HL}, and $H^2(E^\vee)$ is 0-dimensional by assumption.  Hence $\Hom (T,E)=0$ for any pure 1-dimensional sheaf.
\end{proof}

\begin{pro}\label{pro-CstH0zerodimcoprimechar}
Let $E \in \Ap$ be an object of nonzero rank, with relatively prime degree and rank, and such that $H^0(E)$ is 0-dimensional.  Let $\sigma$ be a polynomial stability on $X$ satisfying condition V3.  Then $E$ is  $\sigma$-semistable if and only if it is $\sigma$-stable, if and only if it satisfies all the following conditions:
\begin{enumerate}
\item[(a)] $H^{-1}(E)$ is torsion free with homological dimension at most 1;
\item[(b)] $H^{-1}(E)$ is $\mu$-stable;
\item[(c)] $E^\vee \in \langle \Coh_{\leq 0}(X),\Coh_{\geq 3}(X)[1]\rangle [-2]$.
\end{enumerate}
\end{pro}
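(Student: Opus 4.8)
The plan is to prove the three-way equivalence by closing the cycle of implications: $\sigma$-stable $\Rightarrow$ $\sigma$-semistable (immediate from the definitions), then $\sigma$-semistable $\Rightarrow$ (a)$\wedge$(b)$\wedge$(c), and finally (a)$\wedge$(b)$\wedge$(c) $\Rightarrow$ $\sigma$-stable. For the implication $\sigma$-semistable $\Rightarrow$ (a), (b), (c), I would start from Lemma \ref{lemma-Cstablereflexive}, which gives that $H^{-1}(E)$ is a $\mu$-semistable reflexive sheaf; since a reflexive sheaf on a smooth threefold is torsion-free of homological dimension at most $1$, condition (a) is immediate. For (b), I would use the standard fact that $\mu$-semistability upgrades to $\mu$-stability under the coprimality of rank $ch_0$ and degree $ch_1$: a destabilizing saturated subsheaf of equal slope would have rank divisible by $\rank H^{-1}(E)$, contradicting that its rank is strictly smaller. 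For (c), I would feed the $0$-dimensionality of $H^0(E)$ into Lemma \ref{lemma-dualtrianglelesanal}, which yields $H^1(E^\vee)\cong H^{-1}(E)^\ast$ (a dual sheaf, hence torsion-free and so in $\Coh_{\geq 3}(X)$), the exact sequence \eqref{eq-lemma-dualtrianglelesanal-1} exhibiting $H^2(E^\vee)$ as a subsheaf of $\EExt^1(H^{-1}(E),\OO_X)$, and $H^3(E^\vee)=0$. The essential input is that reflexivity of $H^{-1}(E)$ confines its non-locally-free locus to codimension at least $3$, so $\EExt^1(H^{-1}(E),\OO_X)$, and therefore $H^2(E^\vee)$, is $0$-dimensional; together with the vanishing of $H^i(E^\vee)$ for $i\notin\{1,2\}$ this says precisely that $E^\vee[2]\in\langle \Coh_{\leq 0}(X),\Coh_{\geq 3}(X)[1]\rangle$, which is (c).

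For the converse implication (a), (b), (c) $\Rightarrow$ $\sigma$-stable, I would invoke Lemma \ref{lemma-coprimecharCst}: since (b) already provides that $H^{-1}(E)$ is $\mu$-stable, it remains only to check $\Hom_{D^b(X)}(\Coh_{\leq 1}(X),E)=0$. Using the torsion filtration $0\to A_0\to A\to A_1\to 0$ of any $A\in\Coh_{\leq 1}(X)$ into its $0$-dimensional part $A_0$ and pure $1$-dimensional part $A_1$, together with left-exactness of $\Hom(-,E)$, this reduces to the vanishing of $\Hom(T,E)$ for $T$ pure $1$-dimensional and of $\Hom(\OO_x,E)$ for skyscrapers $\OO_x$. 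The first is exactly Lemma \ref{lemma-nopure1dimsubobj}, whose four hypotheses are supplied by (a), the standing assumption on $H^0(E)$, and the two halves of (c). For the skyscraper case I would argue directly from (c): dualizing gives $\Hom(\OO_x,E)\cong\Hom(E^\vee,\OO_x^\vee)=\Hom(E^\vee,\OO_x[-3])$, and since (c) places all cohomology of $E^\vee$ in degrees $1$ and $2$, the truncation triangle $H^1(E^\vee)[-1]\to E^\vee\to H^2(E^\vee)[-2]$ reduces this to $\Ext^{-2}(H^1(E^\vee),\OO_x)=0$ and $\Ext^{-1}(H^2(E^\vee),\OO_x)=0$, hence $\Hom(\OO_x,E)=0$. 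With $\Hom(\Coh_{\leq 1}(X),E)=0$ established, Lemma \ref{lemma-coprimecharCst} yields $\sigma$-stability and closes the cycle.

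The point I expect to require the most care is the skyscraper vanishing $\Hom(\OO_x,E)=0$ in the backward direction: unlike in Lemma \ref{lemma-dualtrianglelesanal}, we cannot appeal to semistability here, since that is precisely what we are trying to establish, so the vanishing must be extracted purely from the cohomological amplitude recorded in (c) via the dualizing functor and truncation. A second delicate point is the passage from $\mu$-semistable to $\mu$-stable, where the coprimality of rank and degree is indispensable and is exactly the place the hypothesis on $ch$ enters. The remaining work — translating membership in $\langle \Coh_{\leq 0}(X),\Coh_{\geq 3}(X)[1]\rangle[-2]$ into constraints on the cohomology sheaves of $E^\vee$ and matching them against the hypotheses of Lemmas \ref{lemma-nopure1dimsubobj} and \ref{lemma-coprimecharCst} — is routine bookkeeping once this dictionary is in place.
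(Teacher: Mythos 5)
Your proof is correct and follows essentially the same route as the paper's own: the forward direction via Lemma \ref{lemma-Cstablereflexive}, the coprimality upgrade to $\mu$-stability, and Lemma \ref{lemma-dualtrianglelesanal} combined with the $0$-dimensionality of $\EExt^1(H^{-1}(E),\OO_X)$ coming from reflexivity; the converse via Lemma \ref{lemma-coprimecharCst}, splitting $\Hom_{D^b(X)}(\Coh_{\leq 1}(X),E)=0$ into the skyscraper case (using $H^3(E^\vee)=0$ from condition (c)) and the pure $1$-dimensional case (Lemma \ref{lemma-nopure1dimsubobj}). The only difference is that you spell out steps the paper leaves implicit, namely the coprimality argument upgrading $\mu$-semistability to $\mu$-stability, the truncation argument giving $\Hom(\OO_x,E)=0$, and the torsion-filtration reduction of $\Coh_{\leq 1}(X)$ to skyscrapers and pure $1$-dimensional sheaves.
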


\begin{proof}
Take any object $E \in \Ap$ with nonzero rank,  relatively prime degree and rank, and such that $H^0(E)$ is 0-dimensional.  Suppose $\sigma$ is any polynomial stability satisfying condition V3.  Suppose $E$ is $\sigma$-semistable.  By Lemma \ref{lemma-Cstablereflexive}, $E$ satisfies conditions (a) and (b).  Then, by Lemma \ref{lemma-dualtrianglelesanal}, the cohomology of $E^\vee$ is concentrated in degrees 1 and 2, and $H^1(E^\vee)$ is torsion-free (in fact, reflexive).  From the reflexivity of $H^{-1}(E)$, we get that $\EExt^1 (H^{-1}(E),\OO_X)$ is supported in dimensional 0; from the exact sequence \eqref{eq-lemma-dualtrianglelesanal-1}, we get that $H^2(E^\vee)$ is a 0-dimensional sheaf.  Hence $E$ satisfies condition (c).

Next, suppose $E \in \Ap$ has nonzero rank, with relatively prime degree and rank, that $H^0(E)$ is 0-dimensional, and $E$ satisfies conditions (a), (b) and (c).   By Lemma \ref{lemma-coprimecharCst}, $E$ would be $\sigma$-stable if we can show that $\Hom_{D^b(X)}(\Coh_{\leq 1}(X),E)=0$.

To prove $\Hom_{D^b(X)}(\Coh_{\leq 1}(X),E)=0$, we first show that $\Hom (\Coh_{\leq 0}(X),E)=0$.  To this end, it suffices to show  $\Hom_{D^b(X)} (\OO_x,E)=0$ where $\OO_x$ is the skyscraper sheaf supported at the closed point $x \in X$, for any $x$.  However, $\Hom (\OO_x, E) \cong \Hom (E^\vee, \OO_x^\vee) \cong \Hom (E^\vee, \OO_x [-3])=0$, because $H^3(E^\vee)=0$ by condition (c).  So $\Hom (\Coh_{\leq 0}(X),E)=0$ holds. Now, condition (c) also says that $H^2(E^\vee)$ is a 0-dimensional sheaf.  By Lemma \ref{lemma-nopure1dimsubobj}, we have $\Hom_{D^b(X)} (T,E)=0$ for any pure 1-dimensional sheaf $T$.  This, combined with $\Hom (\Coh_{\leq 0}(X),E)=0$, gives the vanishing $\Hom_{D^b(X)}(\Coh_{\leq 1}(X),E)=0$, completing the proof of the proposition.
\end{proof}

\begin{coro}\label{coro-35equiv}
Let $E \in \Ap$ be an object of nonzero rank, with relatively prime degree and rank, and such that $H^0(E)$ is 0-dimensional.  Then $E$ is $\sigma_3$-stable if and only if it is $\sigma_5$-stable.
\end{coro}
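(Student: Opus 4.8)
The plan is to reduce the statement directly to Proposition \ref{pro-CstH0zerodimcoprimechar}, whose characterisation of stability is phrased entirely in terms of the intrinsic homological conditions (a), (b) and (c) on $E$, and which makes no reference to the particular stability $\sigma$ beyond the requirement that $\sigma$ satisfy condition V3. The key point is that both $\sigma_3$ and $\sigma_5$ satisfy V3: this is the one external input, and it is already recorded in the discussion following the definitions of conditions V2 and V3 in Section \ref{section-polystab3folds}, where it is noted that $\sigma_3$ and $\sigma_5$ both satisfy V3.

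Concretely, I would first observe that the hypotheses on $E$ in this corollary -- nonzero rank, relatively prime degree and rank, and $H^0(E)$ zero-dimensional -- are exactly the standing hypotheses of Proposition \ref{pro-CstH0zerodimcoprimechar}. I would then apply that proposition twice. Taking $\sigma = \sigma_3$ gives that $E$ is $\sigma_3$-stable if and only if $E$ satisfies conditions (a), (b) and (c); taking $\sigma = \sigma_5$ gives that $E$ is $\sigma_5$-stable if and only if $E$ satisfies the very same conditions (a), (b) and (c). Since these three conditions are properties of $E$ alone and are insensitive to the choice of polynomial stability, the two biconditionals share an identical right-hand side, and the equivalence of $\sigma_3$-stability with $\sigma_5$-stability follows at once.

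There is no substantive obstacle in this argument: all of the analytic work -- the reflexivity and $\mu$-stability of $H^{-1}(E)$, the structure of $E^\vee$, and the vanishing $\Hom_{D^b(X)}(\Coh_{\leq 1}(X),E)=0$ -- was already carried out in establishing Proposition \ref{pro-CstH0zerodimcoprimechar} and its supporting Lemmas \ref{lemma-Cstablereflexive}, \ref{lemma-dualtrianglelesanal} and \ref{lemma-nopure1dimsubobj}. The only thing that genuinely needs checking here is that the stability-vector configurations for $\sigma_3$ and $\sigma_5$ in Figure \ref{figure-PTstab} both realise condition V3, which amounts to reading off the phase inequalities $\phi(\rho_0),\phi(-\rho_2),\phi(\rho_1) > \phi(-\rho_3)$ from the two pictures. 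Once that is noted, the corollary is an immediate corollary of the proposition, so I would keep the proof to a single short paragraph invoking Proposition \ref{pro-CstH0zerodimcoprimechar} for both stabilities.
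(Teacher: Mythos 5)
Your proposal is correct and matches the paper exactly: the paper states this corollary without a separate proof precisely because it follows immediately from applying Proposition \ref{pro-CstH0zerodimcoprimechar} to both $\sigma_3$ and $\sigma_5$, which (as noted in Section \ref{section-polystab3folds}) both satisfy condition V3, so each stability is equivalent to the same stability-independent conditions (a), (b), (c). Nothing further is needed.
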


\subsection{Stable objects for stabilities satisfying V2}\label{subsection-V2char}

\begin{lemma}\label{lemma-PTsigma4charcoprime}
Let $E \in \Ap$ be an object of nonzero rank, with relatively prime degree and rank.  Let $\sigma$ be a polynomial stability on $X$ satisfying condition $V2$.  Then $E$ is $\sigma$-semistable if and only if it is $\sigma$-stable  if and only if it satisfies the following conditions:
\begin{enumerate}
\item $H^{-1}(E)$ is torsion-free and $\mu$-stable;
\item $H^0(E)$ is 0-dimensional;
\item $\Hom_{D^b(X)}(\Coh_{\leq 0}(X),E)=0$.
\end{enumerate}
\end{lemma}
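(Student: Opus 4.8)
The plan is to establish the cyclic chain of implications
\[
\text{$\sigma$-semistable} \;\Longrightarrow\; \text{(1),(2),(3)} \;\Longrightarrow\; \text{$\sigma$-stable} \;\Longrightarrow\; \text{$\sigma$-semistable},
\]
the last arrow being trivial. Throughout I use the leading-order phases in $\Ap$ as $m \gg 0$: a nonzero-rank object has phase $\phi(-\rho_3)$, a pure $2$-dimensional sheaf shifted into $\Ap$ has phase $\phi(-\rho_2)$, a sheaf with $1$-dimensional support has phase $\phi(\rho_1)$, and a $0$-dimensional sheaf has phase $\phi(\rho_0)$. The feature of V2 that drives the whole argument is the ordering $\phi(\rho_0), \phi(-\rho_2) > \phi(-\rho_3) > \phi(\rho_1)$; in particular $1$-dimensional sheaves sit strictly \emph{below} rank objects, the opposite of the V3 situation, which is exactly why condition (3) need only rule out $0$-dimensional subobjects rather than all of $\Coh_{\leq 1}(X)$.

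For the first implication, suppose $E$ is $\sigma$-semistable. As in Lemma \ref{lemma-Cstablereflexive}, a pure $2$-dimensional torsion subsheaf $S \subseteq H^{-1}(E)$ would give a subobject $S[1] \hookrightarrow E$ with $\phi(S[1]) \succ \phi(E)$, contradicting semistability; hence $H^{-1}(E)$ is torsion-free. Comparing second-order terms, a $\mu$-destabilising saturated subsheaf would likewise give a subobject of strictly larger phase, so $H^{-1}(E)$ is $\mu$-semistable, and the coprimality of degree and rank upgrades this to $\mu$-stability, giving (1). For (2), note $H^0(E)$ is a quotient of $E$ via the canonical sequence; if $H^0(E)$ had $1$-dimensional support its phase would be $\phi(\rho_1) \prec \phi(E)$, violating semistability for quotients, so $H^0(E)$ is $0$-dimensional. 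For (3), any nonzero map $\OO_x \to E$ from a skyscraper is automatically injective in $\Ap$ (skyscrapers are simple) and has phase $\phi(\rho_0) \succ \phi(E)$, so no such map exists, whence $\Hom(\Coh_{\leq 0}(X),E)=0$.

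For the second implication, assume (1),(2),(3) and take any short exact sequence $0 \to A \to E \to B \to 0$ in $\Ap$ with $A \neq 0 \neq B$; I will show $\phi(A) \prec \phi(E)$. From the long exact cohomology sequence
\[
0 \to H^{-1}(A) \to H^{-1}(E) \overset{\al}{\to} H^{-1}(B) \to H^0(A) \to H^0(E) \to H^0(B) \to 0,
\]
I case-split on $\rank H^{-1}(A)$. If it is $0$, torsion-freeness of $H^{-1}(E)$ forces $H^{-1}(A)=0$, so $A = H^0(A) \in \Coh_{\leq 1}(X)$; condition (3) rules out $0$-dimensional support (else $A$ is a $0$-dimensional subobject of $E$), so $A$ has $1$-dimensional support and $\phi(A) = \phi(\rho_1) \prec \phi(E)$. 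If $0 < \rank H^{-1}(A) < \rank H^{-1}(E)$, then since $H^0(A)\in\Coh_{\leq 1}(X)$ contributes nothing to rank or degree, the reduced slope of $A$ is governed by $H^{-1}(A)$, and $\mu$-stability of $H^{-1}(E)$ forces $\phi(A) \prec \phi(E)$ exactly as in Lemma \ref{lemma-coprimecharCst}. Finally, if $\rank H^{-1}(A) = \rank H^{-1}(E)$, then $\image \al$ is a torsion subsheaf of the torsion-free $H^{-1}(B)$, hence $0$, so $\al=0$ and the resulting injection $H^{-1}(B) \hookrightarrow H^0(A)$ forces $H^{-1}(B) \in \Coh_{\leq 1}(X) \cap \Coh_{\geq 2}(X) = 0$; thus $B = H^0(B)$ is a quotient of the $0$-dimensional sheaf $H^0(E)$ by condition (2), hence $0$-dimensional with $\phi(B) = \phi(\rho_0) \succ \phi(E)$, equivalently $\phi(A) \prec \phi(E)$ by the seesaw.

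The step I expect to be most delicate is the middle case $0 < \rank H^{-1}(A) < \rank H^{-1}(E)$: there $A$ and $E$ share the leading phase $\phi(-\rho_3)$, so the strict inequality must be extracted from the next-order ($\mu$-slope) comparison, and one must verify that the $H^0$-part of $A$ genuinely affects neither rank nor degree and so cannot spoil the inequality coming from $\mu$-stability. Conceptually, however, the genuinely new point relative to the V3 analysis is the reversed inequality $\phi(\rho_1) \prec \phi(-\rho_3)$: it is what makes $1$-dimensional subobjects harmless (the case $\rank H^{-1}(A)=0$) while simultaneously forcing both $H^0(E)$ and the quotient $B$ to be $0$-dimensional, and it is precisely the reason condition (3) is stated only for $\Coh_{\leq 0}(X)$ rather than for all of $\Coh_{\leq 1}(X)$.
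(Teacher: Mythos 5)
Your forward implication (semistable $\Rightarrow$ (1)--(3)) is exactly the paper's argument: torsion-freeness and $\mu$-semistability of $H^{-1}(E)$ from $\phi(-\rho_2) > \phi(-\rho_3)$ upgraded to $\mu$-stability by coprimality, $0$-dimensionality of $H^0(E)$ from the canonical surjection $E \twoheadrightarrow H^0(E)$ and $\phi(-\rho_3) > \phi(\rho_1)$, and (3) from $\phi(\rho_0) > \phi(-\rho_3)$. For the converse, the paper simply defers to the PT case of \cite[Proposition 2.24]{Lo2}, so your self-contained subobject analysis is a legitimately different route; your first two cases ($\rank H^{-1}(A) = 0$ handled via condition (3) and the inequality $\phi(\rho_1) \prec \phi(-\rho_3)$, and $0 < \rank H^{-1}(A) < \rank H^{-1}(E)$ handled via $\mu$-stability as in Lemma \ref{lemma-coprimecharCst}) are sound. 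However, the last case contains a genuine error.

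When $\rank H^{-1}(A) = \rank H^{-1}(E)$, you assert that $\image \al$ is ``a torsion subsheaf of the torsion-free $H^{-1}(B)$, hence $0$.'' But $H^{-1}(B)$ need not be torsion-free: since $B \in \Ap$, all that is guaranteed is $H^{-1}(B) \in \Coh_{\geq 2}(X)$, which excludes torsion only in dimensions $\leq 1$ and allows $H^{-1}(B)$ to be a nonzero \emph{pure $2$-dimensional} sheaf. This genuinely occurs: take $E = F[1]$ with $F$ torsion-free, and $A = F'[1]$ where $F' \subset F$ is a same-rank subsheaf whose quotient $T = F/F'$ is pure $2$-dimensional; then $B = T[1] \in \Ap$ and $\al$ is the surjection $F \to T$, which is certainly nonzero. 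So your conclusions $\al = 0$, $H^{-1}(B) = 0$, and ``$B$ is a quotient of $H^0(E)$'' all fail in this subcase. The repair is short and is exactly the dichotomy the paper uses in the corresponding case of Lemma \ref{lemma-coprimecharCst}: $H^{-1}(B)$ has rank $0$ and lies in $\Coh_{\geq 2}(X)$, hence is either zero or pure $2$-dimensional. If it is pure $2$-dimensional, the leading term of $Z_\sigma(B)(m)$ is $-\rho_2\, \omega^2 ch_1(H^{-1}(B))\, m^2$, so $\phi(B)(m) \to \phi(-\rho_2) > \phi(-\rho_3)$, giving $\phi(E) \prec \phi(B)$ and hence $\phi(A) \prec \phi(E)$ by seesaw; if it is zero, your argument that $B = H^0(B)$ is a $0$-dimensional quotient of $H^0(E)$ with phase $\phi(\rho_0) \succ \phi(-\rho_3)$ goes through verbatim. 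With that subcase inserted, your proof of the converse is complete.
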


\begin{proof}
Suppose $E$ is $\sigma$-semistable.  Then $H^{-1}(E)$ is $\mu$-semistable because $\phi (-\rho_2) > \phi (-\rho_3)$.  Hence $H^{-1}(E)$ is $\mu$-stable by our coprime assumption on degree and rank.  Property (3) follows from $\phi (\rho_0) > \phi (-\rho_3)$.  Since we have a canonical surjection $E \twoheadrightarrow H^0(E)$ in $\Ap$ and $\phi (-\rho_3) > \phi (\rho_1)$, we get that $H^0(E)$ cannot be 1-dimensional, and so must be 0-dimensional.  Hence $E$ satisfies properties (1) through (3).

The rest of the proof is identical to the proof for the PT case in \cite[Proposition 2.24]{Lo2}.
\end{proof}

\begin{coro}\label{coro-PT4equiv}
Let $E \in \Ap$ be an object of nonzero rank, with relatively prime degree and rank.  Then $E$ is PT-stable if and only if $E$ is $\sigma_4$-stable.
\end{coro}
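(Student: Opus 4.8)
The plan is to read off Corollary \ref{coro-PT4equiv} directly from the intrinsic characterisation established in Lemma \ref{lemma-PTsigma4charcoprime}. The crucial point is that both PT-stability and $\sigma_4$-stability satisfy condition V2 (this was recorded in the discussion following the statement of V2 and V3 in Section \ref{section-polystab3folds}, where it was noted that PT and $\sigma_4$ both satisfy V2, while $\sigma_3$ and $\sigma_5$ both satisfy V3). So my first step would be to confirm, from the configurations of the stability vectors $\rho_i$ in Figure \ref{figure-PTstab}, that both PT and $\sigma_4$ have perversity function $p(d) = -\lfloor \frac{d}{2} \rfloor$, satisfy V1, and have phases obeying $\phi(\rho_0), \phi(-\rho_2) > \phi(-\rho_3) > \phi(\rho_1)$.

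Once that is in hand, the argument is essentially formal. Lemma \ref{lemma-PTsigma4charcoprime} applies to \emph{any} polynomial stability $\sigma$ satisfying V2, and characterises the $\sigma$-stable objects of nonzero rank with relatively prime degree and rank by the three conditions: $H^{-1}(E)$ torsion-free and $\mu$-stable, $H^0(E)$ zero-dimensional, and $\Hom_{D^b(X)}(\Coh_{\leq 0}(X), E) = 0$. These conditions make no reference to which particular V2 stability one has chosen; they are intrinsic to the object $E$ and the threefold $X$. Therefore, for a fixed $E \in \Ap$ of nonzero rank with coprime degree and rank, applying the lemma with $\sigma = \text{PT}$ shows that $E$ is PT-stable if and only if it satisfies conditions (1)--(3), and applying it again with $\sigma = \sigma_4$ shows that $E$ is $\sigma_4$-stable if and only if it satisfies the same conditions (1)--(3). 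Chaining these two equivalences yields that $E$ is PT-stable if and only if $E$ is $\sigma_4$-stable, which is the desired corollary.

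There is no real obstacle here: the analytic content lives entirely in Lemma \ref{lemma-PTsigma4charcoprime}, whose proof in turn defers to \cite[Proposition 2.24]{Lo2}, and the corollary is a clean consequence of the fact that the homological characterisation is insensitive to the specific placement of the stability vectors within the V2 regime. The only point requiring a moment's care is the preliminary verification that PT and $\sigma_4$ genuinely fall under condition V2 rather than V3; this is immediate from the vector configurations but is the single fact the reader must grant. I would therefore write the proof as a two-line deduction, explicitly invoking Lemma \ref{lemma-PTsigma4charcoprime} twice and noting that both stabilities satisfy V2.
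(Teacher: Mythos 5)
Your proposal is correct and is exactly how the paper derives this corollary: the paper states it without a separate proof precisely because it follows from applying Lemma \ref{lemma-PTsigma4charcoprime} to both PT and $\sigma_4$ (both of which satisfy V2, as recorded in Section \ref{section-polystab3folds}) and chaining the two intrinsic characterisations. No gaps; your extra care in verifying the V2 condition from Figure \ref{figure-PTstab} is the only non-formal ingredient, and you identified it correctly.
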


\begin{coro}\label{coro-everybodyisV2}
 Let $\sigma$ be any polynomial stability on $X$ satisfying condition V3.  Let $E \in \Ap$ be any object of nonzero rank with relatively prime degree and rank.  If $E$ is $\sigma$-semistable and $H^0(E)$ is 0-dimensional, then $E$ is stable with respect to any polynomial stability satisfying condition V2.
\end{coro}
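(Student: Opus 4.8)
The plan is to deduce the statement purely from the two homological characterisations already established: Proposition \ref{pro-CstH0zerodimcoprimechar} for stabilities satisfying V3, and Lemma \ref{lemma-PTsigma4charcoprime} for stabilities satisfying V2. Both characterisations apply precisely in the regime of the corollary---nonzero rank, coprime degree and rank, and $0$-dimensional $H^0(E)$---so the task reduces to checking that the V3 conditions (a), (b), (c) imply the V2 conditions (1), (2), (3), after which the conclusion is immediate and does not depend on which particular V2-stability we use.

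First I would apply Proposition \ref{pro-CstH0zerodimcoprimechar}: since $E$ is $\sigma$-semistable for $\sigma$ satisfying V3, with coprime degree and rank and $H^0(E)$ $0$-dimensional, $E$ satisfies (a) $H^{-1}(E)$ is torsion-free of homological dimension at most $1$; (b) $H^{-1}(E)$ is $\mu$-stable; and (c) $E^\vee \in \langle \Coh_{\leq 0}(X),\Coh_{\geq 3}(X)[1]\rangle[-2]$. Conditions (a) and (b) together give exactly condition (1) of Lemma \ref{lemma-PTsigma4charcoprime} (torsion-free and $\mu$-stable), and condition (2) of the lemma is the hypothesis that $H^0(E)$ is $0$-dimensional. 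It remains only to verify condition (3), namely $\Hom_{D^b(X)}(\Coh_{\leq 0}(X),E)=0$.

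For this I would unpack condition (c): it forces the cohomology of $E^\vee$ to sit in degrees $1$ and $2$ only, so in particular $H^3(E^\vee)=0$. Then for any skyscraper $\OO_x$ one has $\Hom(\OO_x,E)\cong\Hom(E^\vee,\OO_x^\vee)\cong\Hom(E^\vee,\OO_x[-3])=0$, exactly as in the proof of Proposition \ref{pro-CstH0zerodimcoprimechar}; running over all closed points $x$ gives the vanishing against all of $\Coh_{\leq 0}(X)$, which is condition (3). With (1), (2), (3) in hand, Lemma \ref{lemma-PTsigma4charcoprime} yields that $E$ is stable with respect to any V2-stability.

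I do not expect a genuine obstacle here: the corollary is essentially a bookkeeping bridge between the two characterisation results, and the only mildly substantive point is recognising that the degree-$3$ vanishing built into condition (c) is precisely what powers condition (3). The one thing I would be careful about is that the conclusion should be genuinely uniform in the choice of V2-stability---but this is automatic, since Lemma \ref{lemma-PTsigma4charcoprime} characterises V2-stable objects by conditions (1)--(3) that make no reference to the specific stability vectors beyond the defining V2 inequalities, so the same $E$ works for every such $\sigma$.
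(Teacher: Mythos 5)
Your proof is correct, and it ends exactly where the paper's does: feeding conditions (1)--(3) into Lemma \ref{lemma-PTsigma4charcoprime}. The difference is in how the hypotheses of that lemma are verified. The paper never invokes Proposition \ref{pro-CstH0zerodimcoprimechar}; it argues in two lines directly from the V3 phase inequalities: $\phi(-\rho_2)>\phi(-\rho_3)$ forces $H^{-1}(E)$ to be torsion-free and $\mu$-semistable (hence $\mu$-stable by coprimality), and $\phi(\rho_0)>\phi(-\rho_3)$ forces $\Hom_{D^b(X)}(\Coh_{\leq 0}(X),E)=0$, since a $0$-dimensional subobject has constant central charge of phase $\phi(\rho_0)$, which beats the limiting phase $\phi(-\rho_3)$ of the nonzero-rank object $E$. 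You instead cite the full V3 characterisation to get (a)--(c) and then recover the Hom-vanishing by unpacking (c) into $H^3(E^\vee)=0$ and dualizing against skyscrapers. This is logically sound --- Proposition \ref{pro-CstH0zerodimcoprimechar} is already established, so there is no circularity --- but it is worth noticing that it is a round trip: in the paper, the vanishing $H^3(E^\vee)=0$ is itself deduced (via Lemma \ref{lemma-dualtrianglelesanal}) from precisely the vanishing $\Hom(\OO_x,E)=0$ that you then re-extract from it. What each approach buys: yours is more modular, exhibiting the corollary as a purely formal bridge between the two characterisation results with no further phase computations, and making transparent that the conclusion is uniform over all V2-stabilities; the paper's is shorter and more elementary, reading conditions (1) and (3) straight off the definition of V3-semistability.
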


\begin{proof}
Since $\phi (-\rho_2) >  \phi (-\rho_3)$ for $\sigma$, $H^{-1}(E)$ must be torsion-free and $\mu$-semistable, hence $\mu$-stable, by the coprime assumption. Since $\phi (\rho_0) > \phi (-\rho_3)$ for $\sigma$, the $\sigma$-stability of $E$ gives $\Hom_{D^b(X)}(\Coh_{\leq 0}(X),E)=0$.  The corollary then follows  from  Lemma \ref{lemma-PTsigma4charcoprime}.
\end{proof}

\section{Moduli of polynomial stable objects and reflexive sheaves}

\subsection{Moduli spaces of polynomial stable objects}

\begin{pro}[Openness]\label{pro-standardpolystopenness}
Let $ch$ be a fixed Chern character such that $ch_0\neq 0$ and $ch_0, ch_1$ are relatively prime.  Let $S$ be a Noetherian scheme over the ground field $k$, and let $E_S \in D^b (X \times_{\Spec k} S)$ be a flat family of objects in $\Ap$ over $S$ whose fibres have Chern character $ch$.  Let $\sigma$ be any  polynomial stability on $X$ satisfying V1.    Suppose $s_0 \in S$ is a point such that $E_{s_0}$ is $\sigma$-stable.  Then there is an open subset $U \subset S$ containing $s_0$ such that for all $s \in U$, the fibre $E_s$ is $\sigma$-stable.  (That is, for flat families of objects in $\Ap$ of Chern character $ch$, being $\sigma$-stable is an open property.)
\end{pro}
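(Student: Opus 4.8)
The plan is to combine the homological characterisations of Section~\ref{section-stabobjchar} with the boundedness of Lemma~\ref{lemma-boundednesseveryone} and a semicontinuity argument, exhibiting the $\sigma$-stable locus as the complement in $S$ of a closed ``destabilised'' locus. First I would record that, since $E_S$ is flat over the Noetherian base $S$, the Chern character of the fibres is locally constant; after replacing $S$ by the open and closed subscheme on which the fibrewise Chern character equals $ch$, I may assume $ch_0\neq 0$ and $ch_0,ch_1$ coprime on all of $S$, so that by the characterisations $\sigma$-semistability and $\sigma$-stability agree on stable fibres. By the reduction in Section~\ref{section-polystab3folds}, any stability satisfying V1 is carried by a $\GLuniv$-action (which preserves (semi)stable objects, hence the conclusion) to one of DT, PT, $\sigma_3$, $\sigma_4$, $\sigma_5$; and by Corollaries~\ref{coro-PT4equiv} and~\ref{coro-35equiv} it suffices to treat DT, one stability satisfying V2 (say PT), and one satisfying V3 (say $\sigma_3$). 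The DT case is the classical openness of Gieseker-stability in flat families of sheaves, so the content lies in the V2 and V3 cases.

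For a stability satisfying V2, Lemma~\ref{lemma-PTsigma4charcoprime} characterises stability of $E_s$ by three conditions: (1) $H^{-1}(E_s)$ is torsion-free and $\mu$-stable, (2) $H^0(E_s)$ is $0$-dimensional, and (3) $\Hom_{D^b(X)}(\Coh_{\leq 0}(X),E_s)=0$. I would prove each is open on $S$. Condition (3) is the cleanest: taking $\OO_\Delta$ on $X\times X$ as the $X$-flat family of skyscrapers $\{\OO_x\}_{x\in X}$ and forming the relative $R\HHom$ of the two families over $X\times S$, cohomology-and-base-change shows that $\{(x,s):\Hom_{D^b(X)}(\OO_x,E_s)\neq 0\}$ is closed in $X\times S$; since $X$ is proper, its image in $S$ is closed, with open complement the desired set. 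For (2), I would use that $\Hom_{D^b(X)}(E_s,F)\cong\Hom(H^0(E_s),F)$ for any sheaf $F$, so $H^0(E_s)$ fails to be $0$-dimensional exactly when $E_s$ admits a quotient in $\Ap$ onto a pure $1$-dimensional sheaf; by boundedness such quotients have Chern characters in a finite set, hence are parametrised by a relative Quot-type space proper over $S$, whose image is the closed locus to be avoided. Condition (1) follows the same pattern applied to subobjects: a torsion subsheaf or a $\mu$-destabilising subsheaf of $H^{-1}(E_s)$ produces a destabilising subobject of $E_s$ in $\Ap$ lying in a bounded family, and the locus of $s$ admitting such a subobject is again the closed image of a proper relative parameter space.

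For a stability satisfying V3 I would use Corollary~\ref{coro-everybodyisV2}: a V3-stable object of nonzero rank has $0$-dimensional $H^0$ (as $\phi(\rho_0)>\phi(\rho_1)$ rules out a $1$-dimensional quotient), and is then automatically stable for every V2 stability, so the V3-stable locus sits inside the open V2-stable (PT) locus already produced. It then remains to show that the extra conditions of Proposition~\ref{pro-CstH0zerodimcoprimechar} — that $H^{-1}(E_s)$ has homological dimension at most $1$, and condition (c), i.e. $E_s^\vee\in\langle\Coh_{\leq 0}(X),\Coh_{\geq 3}(X)[1]\rangle[-2]$ — are open. Using that the derived dual $E_S^\vee$ is again a flat family over $S$ and the relative $\EExt$-sheaves, these translate into the vanishing of $\EExt^{\geq 2}(H^{-1}(E_s),\OO_X)$ beyond the expected degree and into the $0$-dimensionality of $H^2(E_s^\vee)$; both are upper-semicontinuous and I would make them open by the same semicontinuity-plus-properness mechanism.

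The main obstacle is not the semicontinuity itself but setting up, in the tilted heart $\Ap$ rather than in $\Coh(X)$, the proper relative parameter spaces of destabilising sub- and quotient objects needed to turn ``a destabiliser exists at $s$'' into a closed condition. The two delicate points are: the cohomology sheaves $H^{-1}(E_s),H^0(E_s)$ of a flat family of complexes need not themselves be flat, so every condition on them must first be re-encoded as a $\Hom$- or $\Ext$-vanishing against a bounded family of genuine test objects; and the relative Quot-type spaces for $\Ap$ must be shown proper over $S$, which is exactly where Lemma~\ref{lemma-boundednesseveryone} enters, together with the valuative and properness inputs underlying the construction of the PT moduli in \cite{Lo2}. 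Once these proper parameter spaces are in hand, openness of $\sigma$-stability is the formal consequence that the complement of the image of a proper morphism is open.
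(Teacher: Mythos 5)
Your reductions for the DT and V2 (PT and $\sigma_4$) cases are sound in outline --- though for PT you are essentially re-deriving the openness results of \cite{Lo2}, which the paper simply cites together with Corollary~\ref{coro-PT4equiv} --- but your treatment of the V3 stabilities contains a genuine error that breaks the proof. You claim that a V3-stable object $E$ of nonzero rank has $0$-dimensional $H^0(E)$ ``as $\phi(\rho_0)>\phi(\rho_1)$ rules out a $1$-dimensional quotient.'' This is the wrong phase comparison: a quotient can only destabilise $E$ if its phase is eventually no greater than that of $E$. For $E$ of nonzero rank the phase $\phi(E)(m)$ tends to $\phi(-\rho_3)$, while a pure $1$-dimensional quotient $T$ has $\phi(T)(m)$ tending to $\phi(\rho_1)$; under V3 we have $\phi(\rho_1)>\phi(-\rho_3)$, so such quotients have \emph{larger} limiting phase and are perfectly compatible with stability. (This is precisely what distinguishes V3 from V2, where $\phi(-\rho_3)>\phi(\rho_1)$ forces $H^0(E)$ to be $0$-dimensional, as in Lemma~\ref{lemma-PTsigma4charcoprime}.) Concretely, if $E=[\OO_X\to F]$ is a PT-stable stable-pair complex with $F$ supported on a nonempty curve $C$, then a suitable shift of $E^\vee$ is $\sigma_5$-stable, lies in $\Ap$, and has $H^0$ equal to a nonzero subsheaf of $\EExt^2(\OO_C,\OO_X)$, which is $1$-dimensional. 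Such an object is V3-stable but not V2-stable, so your key assertion that the V3-stable locus sits inside the open V2-stable locus is false, and Corollary~\ref{coro-everybodyisV2} cannot be invoked: the $0$-dimensionality of $H^0(E)$ is a hypothesis there, not a consequence of V3-stability. For the same reason, Corollary~\ref{coro-35equiv} (which also assumes $0$-dimensional $H^0$) does not justify your initial reduction of all V3 stabilities to $\sigma_3$ alone.

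The paper avoids this issue entirely by using duality rather than containment: $\sigma_3$ is dual to $\sigma_4$ and $\sigma_5$ is dual to PT, the derived dual of a flat family is again a flat family, and dualising preserves openness; hence openness for the V3 stabilities is inherited from openness for PT and $\sigma_4$, where one quotes \cite{Lo2} and Corollary~\ref{coro-PT4equiv}. Your argument for the V3 case would need to be replaced by this (or an equivalent) duality step; as written, the semicontinuity machinery you set up afterwards --- openness of homological dimension $\leq 1$ and of condition (c) of Proposition~\ref{pro-CstH0zerodimcoprimechar} --- is applied inside a locus that does not contain all V3-stable points, so it cannot produce the required open neighbourhood of $s_0$ when $H^0(E_{s_0})$ is $1$-dimensional.
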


\begin{proof}
By the discussion in Section \ref{section-polystab3folds}, it suffices to check this when $\sigma$ is one of the stabilities listed in Figure \ref{figure-PTstab}.

 Since being isomorphic to a sheaf is an open property for a flat family of complexes, and being $\mu$-semistable is an open property for a flat family of sheaves, the proposition holds for the DT case.

 Since PT is dual to $\sigma_5$, and $\sigma_3$ is dual to $\sigma_4$, and taking derived dual preserves openness,  it suffices to consider the case when $\sigma$ is either PT or $\sigma_4$.  The proposition then follows from \cite[Proposition 2.24, Proposition 3.3]{Lo2} and Corollary \ref{coro-PT4equiv}.
\end{proof}

As a consequence, we obtain:

\begin{pro}\label{pro-everyoneisproper}
Let $ch$ be a Chern character such that $ch_0 \neq 0$, and $ch_0, ch_1$ are relatively prime.  Let $\sigma$ be any  polynomial stability on $X$ satisfying V1.  Then there is a proper algebraic space of finite type parametrising $\sigma$-stable objects of Chern character $ch$.
\end{pro}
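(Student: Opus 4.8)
The plan is to realize the moduli as an algebraic space of finite type using boundedness together with openness, and then to establish properness through the valuative criterion. The first move is the reduction carried out in Section \ref{section-polystab3folds}: since $\sigma$ satisfies V1, up to the $\widetilde{GL}^+(2,\RR)$-action, shifting, and taking derived duals it suffices to treat the three stabilities DT, PT and $\sigma_3$. Here the coprimality of $ch_0$ and $ch_1$ does essential work: Lemma \ref{lemma-PTsigma4charcoprime}, together with Proposition \ref{pro-CstH0zerodimcoprimechar}, shows that for each of these stabilities every $\sigma$-semistable object of Chern character $ch$ is already $\sigma$-stable, so there are no strictly semistable objects. This coincidence of stability and semistability is what will ultimately supply separatedness.

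For the finite-type algebraic-space structure, I would argue as follows. Lemma \ref{lemma-boundednesseveryone} provides a bounded family of $\sigma$-semistable objects of Chern character $ch$, so they are all parametrized by a single finite-type base (e.g. via a $\mathrm{Quot}$/$\mathrm{Ext}$ presentation, or inside the algebraic stack of objects in $D^b(X)$). Proposition \ref{pro-standardpolystopenness} then cuts out the $\sigma$-stable objects as an open locus. Because stable objects are simple — their only endomorphisms are scalars — the associated moduli stack is a $\mathbb{G}_m$-gerbe over an algebraic space $\mathscr{M}^\sigma_{ch}$ of finite type, which is the candidate space.

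Properness then follows from the valuative criterion, recalling that proper means separated, of finite type, and universally closed. For DT the objects are Gieseker-stable sheaves, whose moduli is a projective, hence proper, scheme, so this case is classical. For PT, the universally closed algebraic space of finite type constructed in \cite{Lo1, Lo2} supplies the existence of limits, while uniqueness of limits (separatedness) is immediate from the absence of strictly semistable objects. The case $\sigma_4$ is identical by Corollary \ref{coro-PT4equiv}. For $\sigma_3$ I would apply the derived-dualizing functor $\DD$: since $\sigma_3$ is dual to $\sigma_4 = $ PT up to shift (Section \ref{section-polystab3folds}), and equivalently $\sigma_3$-stability coincides with $\sigma_5$-stability by Corollary \ref{coro-35equiv} with $\sigma_5$ dual to PT, the functor $\DD$ carries the PT-moduli isomorphically onto the $\sigma_3$-moduli; as $\DD$ preserves flatness, boundedness, finite type and universal closedness, properness transfers along this isomorphism.

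The main obstacle is the existence of limits, i.e.\ universal closedness, which is the only genuinely nonformal input: openness and boundedness are routine given the earlier lemmas, and separatedness drops out of coprimality. For PT this is exactly the content imported from \cite{Lo1, Lo2}. The one delicate point I would still need to verify carefully is that $\DD$ transports that universal-closedness statement faithfully to $\sigma_3$ — that is, that a limiting object produced on the PT side dualizes back to a limit of the original family lying in $\Ap$, with the Chern character transforming correctly under $\DD$.
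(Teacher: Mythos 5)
Your proposal follows the paper's own proof in all structural respects: the finite-type algebraic space comes from openness (Proposition \ref{pro-standardpolystopenness}) together with boundedness (Lemma \ref{lemma-boundednesseveryone}) and the stack-theoretic argument of \cite{TodaLSOp}; universal closedness is reduced, via derived duality and Corollary \ref{coro-PT4equiv}, to the PT case settled in \cite{Lo1, Lo2}; and coprimality ensures the semistable limits produced there are in fact stable. (Your closing worry about transporting universal closedness along $\DD$ is not a real obstruction: $\DD$ is an exact anti-autoequivalence carrying flat families to flat families up to relabelling the Chern character, and the paper's proof invokes exactly this reduction.) The genuine gap is your separatedness step. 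You claim that uniqueness of limits is ``immediate from the absence of strictly semistable objects.'' It is not: separatedness requires the argument of \cite[Theorem 3.20]{TodaLSOp}, which is what the paper cites. Concretely, given two flat families $E_1, E_2$ over a DVR with $\sigma$-stable special fibres and isomorphic generic fibres, one multiplies the generic isomorphism by a power of the uniformizer so that it extends to a morphism $E_1 \to E_2$ that is nonzero on special fibres, and then uses that a nonzero morphism between $\sigma$-stable objects of the same phase is an isomorphism. This argument uses stability of the two special fibres and has nothing to do with the presence or absence of strictly semistable objects; conversely, the absence of strictly semistables by itself provides no mechanism for comparing two candidate limits. (What coprimality genuinely buys, beyond ``semistable $=$ stable,'' is universal closedness of the \emph{stable} moduli: the limits supplied by \cite{Lo2} are a priori only semistable.) As written, separatedness is unproven in your proposal.

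A secondary overreach: you assert that Lemma \ref{lemma-PTsigma4charcoprime} and Proposition \ref{pro-CstH0zerodimcoprimechar} (or Corollary \ref{coro-35equiv}) give ``no strictly semistable objects'' for all three stabilities, including $\sigma_3$. But Proposition \ref{pro-CstH0zerodimcoprimechar} and Corollary \ref{coro-35equiv} assume $H^0(E)$ is $0$-dimensional, and this is not automatic for $\sigma_3$-semistable objects: under V3 one has $\phi(\rho_1) > \phi(-\rho_3)$, so the canonical quotient $E \twoheadrightarrow H^0(E)$ with $H^0(E)$ of dimension $1$ does not destabilize $E$. This does not derail the overall proof, because --- as in the paper, and as your own reduction in effect does --- the $\sigma_3$ case should be handled entirely by applying $\DD$ to the $\sigma_4$ case and invoking Corollary \ref{coro-PT4equiv}, never by a direct characterization of $\sigma_3$-semistable objects.
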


\begin{proof}
 Using our result on openness (Proposition \ref{pro-standardpolystopenness}), we have an algebraic space parametrising the stable objects by the same argument as in \cite{TodaLSOp}.  Boundedness follows from Lemma \ref{lemma-boundednesseveryone}.  Separatedness follows from the same  argument as in \cite[Theorem 3.20]{TodaLSOp}.  As for universal closedness, by using dual if necessary, it suffice to check universal closedness for the moduli of PT-stable and the moduli of $\sigma_4$-stable objects.  However, these two moduli spaces coincide by Corollary \ref{coro-PT4equiv}.  And the universal closedness of the moduli of PT-stable objects follows from \cite[Theorem 2.23]{Lo2}.
\end{proof}

\begin{proof}[Proof of Theorem \ref{pro-onlytwomoduli}]
Since PT-stability is dual to $\sigma_5$-stability, and $\sigma_3$-stability is dual to $\sigma_4$-stability, the proposition follows from Corollary \ref{coro-PT4equiv} and the discussion in Section \ref{section-polystab3folds}.
\end{proof}

\subsection{An intersection of two moduli spaces}\label{subsection-functorialreflexive}

\begin{pro}\label{pro-dualintersectionstobj}
Given an object $E \in \Ap$ of nonzero rank and relatively prime degree and rank, the following are equivalent:
\begin{enumerate}
\item $E$ is stable with respect to a polynomial stability satisfying condition $V2$, as well as a polynomial stability satisfying condition $V3$.
\item $H^{-1}(E)$ is a $\mu$-stable reflexive sheaf, $H^0(E)$ is 0-dimensional, and the map
    \[
      H^2 (\delta) :  \EExt^1 (H^{-1}(E),\OO_X) \to \EExt^3(H^0(E),\OO_X)
    \]
where $\delta$ is as in the sequence \eqref{triangle-Edualised} is surjective.
\end{enumerate}
\end{pro}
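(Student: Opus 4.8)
The plan is to prove the two directions separately, using the homological characterisations already established for V2-stability (Lemma \ref{lemma-PTsigma4charcoprime}) and V3-stability (Proposition \ref{pro-CstH0zerodimcoprimechar}). The key point to keep in mind throughout is that condition (2) of the proposition mentions the surjectivity of $H^2(\delta)$, and by the exact sequence \eqref{eq-lemma-dualtrianglelesanal-1} this surjectivity is precisely the statement that $H^2(E^\vee) = 0$, equivalently that $\EExt^1(H^{-1}(E),\OO_X) \to \EExt^3(H^0(E),\OO_X)$ has trivial kernel-cokernel in the relevant degree. So I would first rewrite condition (2) in terms of the cohomology of $E^\vee$.

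**For the direction $(1) \Rightarrow (2)$:** Assume $E$ is stable for both a V2-stability and a V3-stability. From V3-stability (via Lemma \ref{lemma-Cstablereflexive}), $H^{-1}(E)$ is $\mu$-semistable and reflexive, hence $\mu$-stable by the coprime assumption; from V2-stability (via Lemma \ref{lemma-PTsigma4charcoprime}(2)), $H^0(E)$ is $0$-dimensional. It remains to extract surjectivity of $H^2(\delta)$. Here I would invoke Proposition \ref{pro-CstH0zerodimcoprimechar}: since $E$ is V3-stable with $H^0(E)$ 0-dimensional, condition (c) of that proposition gives $E^\vee \in \langle \Coh_{\leq 0}(X), \Coh_{\geq 3}(X)[1]\rangle[-2]$, which forces $H^2(E^\vee)$ to be $0$-dimensional. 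Combined with the V2-stability condition $\Hom_{D^b(X)}(\Coh_{\leq 0}(X),E)=0$ (Lemma \ref{lemma-PTsigma4charcoprime}(3)), which dualizes to kill any $0$-dimensional piece of $H^2(E^\vee)$, I expect to conclude $H^2(E^\vee)=0$; by \eqref{eq-lemma-dualtrianglelesanal-1} this is exactly the surjectivity of $H^2(\delta)$.

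**For the direction $(2) \Rightarrow (1)$:** Assume the homological conditions in (2). Since $H^{-1}(E)$ is reflexive it has homological dimension at most $1$, so condition (a) of Proposition \ref{pro-CstH0zerodimcoprimechar} holds; condition (b) is the given $\mu$-stability. The surjectivity of $H^2(\delta)$ gives, via \eqref{eq-lemma-dualtrianglelesanal-1}, that $H^2(E^\vee)=0$, and since $H^{-1}(E)$ is reflexive the vanishing $H^3(E^\vee)=0$ follows from Lemma \ref{lemma-dualtrianglelesanal}; these two vanishings plus the degree bounds from reflexivity put $E^\vee$ in $\langle \Coh_{\leq 0}(X), \Coh_{\geq 3}(X)[1]\rangle[-2]$, establishing condition (c). By Proposition \ref{pro-CstH0zerodimcoprimechar}, $E$ is stable with respect to every V3-stability. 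For V2-stability, I would appeal to Corollary \ref{coro-everybodyisV2}: having just shown $E$ is V3-stable with $H^0(E)$ $0$-dimensional, that corollary immediately yields stability with respect to any V2-stability, completing both parts of (1).

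**I expect the main obstacle to be the bookkeeping in $(1)\Rightarrow(2)$** connecting the condition $\Hom_{D^b(X)}(\Coh_{\leq 0}(X),E)=0$ to the precise vanishing of $H^2(E^\vee)$ rather than merely to its $0$-dimensionality, and dually making sure the surjectivity of $H^2(\delta)$ in $(2)\Rightarrow(1)$ is genuinely equivalent to $H^2(E^\vee)=0$ rather than some weaker torsion condition. Everything else is an orchestration of the already-proven lemmas, so the proof should be short once this translation between the map $H^2(\delta)$ and the cohomology sheaf $H^2(E^\vee)$ is made explicit.
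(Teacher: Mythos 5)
Your high-level skeleton is the same as the paper's (Lemma \ref{lemma-Cstablereflexive} and Lemma \ref{lemma-PTsigma4charcoprime} for $(1)\Rightarrow(2)$; Proposition \ref{pro-CstH0zerodimcoprimechar} followed by Corollary \ref{coro-everybodyisV2} for $(2)\Rightarrow(1)$), but the ``key translation'' you announce at the outset and then use in both directions is wrong. Taking cohomology of the triangle \eqref{triangle-Edualised}, with $H^{-1}(E)$ reflexive and $H^0(E)$ $0$-dimensional, yields the exact sequence
\[
0 \to H^2(E^\vee) \to \EExt^1(H^{-1}(E),\OO_X) \overset{H^2(\delta)}{\longrightarrow} \EExt^3(H^0(E),\OO_X) \to H^3(E^\vee) \to 0,
\]
so $H^2(E^\vee)$ is the \emph{kernel} of $H^2(\delta)$ and $H^3(E^\vee)$ is its \emph{cokernel}: surjectivity of $H^2(\delta)$ is equivalent to $H^3(E^\vee)=0$, not to $H^2(E^\vee)=0$. (The sequence \eqref{eq-lemma-dualtrianglelesanal-1} is short exact only because Lemma \ref{lemma-dualtrianglelesanal} first proves $H^3(E^\vee)=0$ from semistability; it does not encode surjectivity as the vanishing of $H^2(E^\vee)$.) This matters: in $(1)\Rightarrow(2)$ you set out to prove $H^2(E^\vee)=0$, which is false in general. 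If $F$ is a $\mu$-stable reflexive sheaf with coprime degree and rank that is not locally free, then $E=F[1]$ satisfies both (1) and (2) (Corollary \ref{coro-reflexivesareV2} and Corollary \ref{coro-everybodyisV2}; surjectivity of $H^2(\delta)$ is vacuous since $H^0(E)=0$), yet $H^2(E^\vee)\cong\EExt^1(F,\OO_X)\neq 0$. More generally, the discussion in Section \ref{subsection-functorialreflexive} shows that $H^2(E^\vee)$ is the kernel of an arbitrary quotient $\EExt^1(H^{-1}(E),\OO_X)\twoheadrightarrow (H^0(E))^D$. Your proposed mechanism also fails on its own terms: since the cohomology of $E^\vee$ sits in degrees at most $3$, the condition $\Hom(\OO_x,E)\cong\Hom(E^\vee,\OO_x[-3])=0$ only detects the top cohomology, i.e.\ it gives $\Hom(H^3(E^\vee),\OO_x)=0$ and hence kills $H^3(E^\vee)$, while saying nothing about $H^2(E^\vee)$.

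There is a second, independent problem in $(2)\Rightarrow(1)$: you cite Lemma \ref{lemma-dualtrianglelesanal} to obtain $H^3(E^\vee)=0$, but that lemma assumes $E$ is semistable for a stability satisfying V3, which is exactly what you are trying to prove, so the citation is circular. Both directions are repaired once the dictionary is corrected. For $(1)\Rightarrow(2)$, condition (c) of Proposition \ref{pro-CstH0zerodimcoprimechar} already gives $H^3(E^\vee)=0$, which by the four-term sequence above \emph{is} the surjectivity of $H^2(\delta)$ (alternatively, the V2 condition $\Hom(\Coh_{\leq 0}(X),E)=0$ dualizes correctly to $H^3(E^\vee)=0$, not to any statement about $H^2$). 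For $(2)\Rightarrow(1)$, surjectivity of $H^2(\delta)$ gives $H^3(E^\vee)=0$ directly from the sequence; moreover $H^1(E^\vee)\cong H^{-1}(E)^\ast$ is torsion-free, and $H^2(E^\vee)$, being a subsheaf of $\EExt^1(H^{-1}(E),\OO_X)$, is $0$-dimensional by reflexivity, so condition (c) of Proposition \ref{pro-CstH0zerodimcoprimechar} holds and your remaining steps (that proposition, then Corollary \ref{coro-everybodyisV2}) go through; this corrected route is precisely the paper's proof.
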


\begin{proof}
Suppose condition (1) holds.  Then $H^{-1}(E)$ is reflexive by Lemma \ref{lemma-Cstablereflexive}, and $H^0(E)$ is 0-dimensional by Lemma \ref{lemma-PTsigma4charcoprime}.  Also, $H^3(E^\vee)=0$ by Proposition \ref{pro-CstH0zerodimcoprimechar}, implying $H^2(\delta)$ is surjective.  Hence condition (2) holds.

For the converse, suppose condition (2) holds.  From the long exact sequence of \eqref{triangle-Edualised}, we get that $H^3(E^\vee)=0$.  So by  Proposition \ref{pro-CstH0zerodimcoprimechar}, $E$ is stable with respect to any polynomial stability satisfying condition V3.  Then by Corollary \ref{coro-everybodyisV2}, $E$ is also stable with respect to any polynomial stability satisfying condition V2.  This completes the proof of the proposition.
\end{proof}

For any polynomial stability condition $\sigma$ satisfying V1, we now know $\sigma$-stability is an open property (by Proposition \ref{pro-standardpolystopenness}, under the coprime assumption).  Hence we can consider the moduli space parametrising objects that are both $\sigma$-stable and $\sigma^\ast$-stable, as in Theorem \ref{theorem-intersectoftwopropermoduli}.

\begin{proof}[Proof of Theorem \ref{theorem-intersectoftwopropermoduli}]
This follows from Propositions \ref{pro-everyoneisproper} and \ref{pro-dualintersectionstobj}.
\end{proof}

\begin{remark}
Of course, since we know PT-semistability is an open property from \cite{Lo2}, its dual, $\sigma_5$-semistability, is also an open property.  Then we have a moduli space parametrising complexes $E$ of nonzero rank that are both PT-semistable and $\sigma_5$-semistable, where $H^{-1}(E)$ is necessarily $\mu$-semistable and reflexive (Lemma \ref{lemma-Cstablereflexive}), and $H^0(E)$ is necessarily 0-dimensional (\cite[Lemma 3.3]{Lo1}), and the map $H^2(\delta)$ is surjective (Lemma \ref{lemma-dualtrianglelesanal}).  However, it is not clear that all complexes of this form are both PT-semistable and $\sigma_5$-semistable; Proposition \ref{pro-dualintersectionstobj} says that this is indeed the case under the coprime assumption on degree and rank.
\end{remark}

Now we explain why the algebraic space in Theorem \ref{theorem-intersectoftwopropermoduli} can be seen as a functorial construction of the moduli of reflexive sheaves.  Given any reflexive sheaf $F$ on a smooth projective threefold $X$, the sheaf $\EExt^1 (F,\OO_X)$ is 0-dimensional.  (When $X= \mathbb{P}^3$ and $F$ is a rank-two $\mu$-stable reflexive sheaf, for example, we have $c_3 (F) = h^0(\EExt^1 (F,\omega_X))$, in which case the length of $\EExt^1 (F,\OO_X)$ can be considered as the number of non-locally free points of $F$, counted with multiplicities \cite[Proposition 2.6]{SRS}.)  Let $q' : \EExt^1 (F,\OO_X) \twoheadrightarrow Q$ denote any nonzero quotient of $\EExt^1 (F,\OO_X)$ in $\Coh (X)$.    If we let $c$ denote the canonical map $F^\vee[1] \overset{c}{\to} H^0(F^\vee[1])$, then the composition
\[
q:  F^\vee[1] \overset{c}{\to} H^0(F^\vee[1]) = \EExt^1 (F,\OO_X) \overset{q'}{\to} Q
\]
is a nonzero morphism in $\Ap$.   Now, we have the string of isomorphisms
\begin{align*}
  \Hom_{D^b(X)} (F^\vee[1], Q) &\cong \Hom_{D^b(X)} (Q^\vee,F[-1]) \text{ by dualizing} \\
  &\cong \Hom_{D^b(X)} (Q^\vee [3],F[2]) \\
  &\cong \Ext^1_{D^b(X)}(Q^D,F[1]),
\end{align*}
where we write $Q^D$ to denote the only cohomology of $Q^\vee [3]$, which sits at degree 0 and is isomorphic to $\EExt^3 (Q,\OO_X)$.  Let $E$ be any complex representing a class in $\Ext^1 (Q^D,F[1])$ corresponding to $q$.  If we consider the exact triangle \eqref{triangle-Edualised} for $E$, then $H^2(\delta)=H^0(q)=q'$, which is surjective.  So if $F$ is a $\mu$-stable reflexive sheaf with relatively prime degree and rank, then $E$ would correspond to a point of the moduli space in Theorem \ref{theorem-intersectoftwopropermoduli}.

In summary, given any complex $E$  that represents a point of the moduli in Theorem \ref{theorem-intersectoftwopropermoduli}, $H^{-1}(E)$ is a $\mu$-stable reflexive sheaf of relatively prime degree and rank, and $(H^0(E))^D$ is a quotient sheaf of $\EExt^1 (H^{-1}(E),\OO_X)$.  Conversely, given any $\mu$-stable reflexive sheaf $F$ of relatively prime degree and rank, and any quotient sheaf $Q$ of $\EExt^{1}(F,\OO_X)$, we obtain a complex $E$ representing a point of the aforementioned moduli space, where $H^{-1}(E) \cong F$ and $H^0(E) \cong Q^D$.

 Let us write  $\mathscr{M}^{PT\cap PT^\ast}_{(ch_0,ch_1,ch_2,ch_3)}$ to denote the moduli
 space in Theorem \ref{theorem-intersectoftwopropermoduli} where the objects have Chern character $ch$.  For fixed $r, d, \beta$ where $r\neq0$ and $r, d$ are relatively prime, we can now consider the moduli functor
\begin{equation}\label{eqn-reflexivemodulic3free}
  \coprod_n \mathscr{M}^{PT \cap PT^\ast}_{(r,d,\beta,n)}
\end{equation}
 whose points correspond to all pairs of the form $([F],Q)$, where $[F]$ is the isomorphism class of a $\mu$-stable reflexive sheaf $F$ such that
 \[
 (ch_0(F[1]), ch_1(F[1]), ch_2(F[1])) =(r,d,\beta),
  \]
  and $Q^D$ is a quotient of $\EExt^1 (F,\OO_X)$.   Each reflexive sheaf $F$ occurs in \eqref{eqn-reflexivemodulic3free} as many times as there are quotients of the 0-dimensional sheaf $\EExt^1 (F,\OO_X)$.

 \section*{Acknowledgements}

This project was partially inspired by the work of Wei-Ping Li and Zhenbo Qin \cite{LiQin}.  The author would like to thank Zhenbo Qin  for helpful discussions, Yogesh More for pointing out a simplification in a proof, Calin Chindris for various comments, and the referee for pointing out an error in an earlier version and suggesting various improvements on the presentation.

\bibliographystyle{plain}
\bibliography{biblio}

\end{document}